\documentclass[12pt]{amsart}

\DeclareFontEncoding{OT2}{}{}

\numberwithin{equation}{section}

\textwidth16cm

\oddsidemargin0.1cm \evensidemargin0.1cm

\title[On $\Z^d$-odometers associated to integer matrices]{On $\Z^d$-odometers associated to integer matrices}

\author[S. Merenkov]{Sergei Merenkov}
\address{Department of Mathematics, City College of New York and CUNY Graduate Center, New York, NY 10031, USA}
\email{smerenkov@ccny.cuny.edu}
\thanks{S. Merenkov supported by NSF grant DMS-1800180.}
\thanks{M. Sabitova supported by PSC-CUNY Award TRADB-53-92 jointly funded by The Professional Staff Congress and The City University of New York}

\author[M. Sabitova]{Maria Sabitova}
\address{Department of Mathematics, Queens College and CUNY Graduate Center, Flushing, NY 11367, USA}
\email{Maria.Sabitova@qc.cuny.edu}




\usepackage{amsmath, amssymb, amsthm,latexsym}
\usepackage{enumerate}
\usepackage{graphicx}
\usepackage{mathtools}
\usepackage{amsmath}
\usepackage{extarrows}
\usepackage[colorinlistoftodos]{todonotes}

\newcommand\N{{\mathbb N}}

\newcommand\Z{{\mathbb Z}}
\newcommand\Q{{\mathbb Q}}
\newcommand\R{{\mathbb R}}
\newcommand\T{{\mathbb T}}

\newcommand\G{{\mathcal G}}
\newcommand\cS{{\mathcal S}}
\newcommand\cO{{\mathcal O}}

\newcommand\id{\operatorname{id}}

\renewcommand\:{\colon}

\newcommand\la{\lambda}
\newcommand\La{\Lambda}

\newcommand\cP{\mathcal{P}}

\newcommand{\sbr}{\smallbreak}

\newcommand{\M}{\operatorname{M}}
\newcommand{\GL}{\operatorname{GL}}
\newcommand{\bbe}{\begin{equation}}
\newcommand{\ee}{\end{equation}}

\newtheorem{theorem}{Theorem}[section]

\newtheorem{lemma}[theorem]{Lemma}
\newtheorem{definition}[theorem]{Definition}

\newtheorem{proposition}[theorem]{Proposition}
\newtheorem{corollary}[theorem]{Corollary}

\theoremstyle{remark}
\newtheorem{remark}[theorem]{Remark}
\newtheorem{example}[theorem]{Example}

\theoremstyle{definition}

\usepackage{hyperref}
\hypersetup{
    colorlinks=true,
    linkcolor=blue,
    filecolor=magenta,      
    urlcolor=cyan,
}

\urlstyle{same}

\begin{document}
\abstract
We extend the results of~\cite{GPS19} on characterization of conjugacy, isomorphism, and continuous orbit equivalence of $\Z^d$-odometers to dimensions $d>2$. We then apply these extensions to the case of odometers defined by matrices with integer coefficients. 
\endabstract
\maketitle

\section{Introduction}\label{s:intro}
\noindent
In this note we first supply an argument that extends the main result of the paper  ``$\Z^d$-odometers and cohomology" by T.~Giordano, I.~F.~Putnam, and C.~F.~Skau~\cite[Theorem~1.5 (1), (2), (3)]{GPS19} to dimensions greater than 2. More precisely, the proofs of conjugacy, isomorphism, and continuous orbit equivalence characterizations of~\cite[Theorem~1.5]{GPS19} are based on~\cite[Theorem~4.4]{GPS19}, the only result in that paper where the dimension restriction $d=1,2$ is important. Our Proposition~\ref{P:Tau} below lifts this restriction and therefore leads to the characterizations in arbitrary dimension $d\ge1$. In turn, our proof of Proposition~\ref{P:Tau} uses Lemma~\ref{L:goodBasis} that made the relevant computations of the image of the first cohomology group under a natural map 
simple and possible in higher dimensions.   
In the last section we apply these results along with the earlier results by the second author~\cite{s1} in the setting of odometers defined by matrices with integer coefficients. 
Below we use the notations and terminology from~\cite{GPS19}, \cite{s}, and \cite{s1} and refer to these papers for more details.

\sbr

If $d\in\N$ and $G$ is a subgroup of $\Z^d$, the group $\Z^d$ acts on $\Z^d/G$ by 
\begin{equation}\label{eq:act}
\phi_G^k(l+G)=k+l+G,\quad k, l\in \Z^d.
\ee 
Let $\G=\{G_1, G_2,\dots\}$ be a decreasing sequence of subgroups of $\Z^d$ of finite index, {\it i.e.}, 
$$
\Z^d=G_1\supseteq G_2\supseteq\cdots,\quad [\Z^d\: G_n]<\infty,\ n\in\N.
$$
For $n\in\N$ and $\Z^d\supseteq G_n\supseteq G_{n+1}$, let $q_n\:\Z^d/G_{n+1}\to\Z^d/G_n$ denote the quotient map $q_n(k+G_{n+1})=k+G_n,\ k\in\Z^d$.  
\begin{definition}
Let $\G=\{G_1, G_2,\dots\}$ be as above and let $X_{\G}$ denote the inverse limit system
$$
\Z^d/G_1\xlongleftarrow{q_1}\Z^d/G_2\xlongleftarrow{q_2}\cdots.
$$ 
A $\Z^d$-\emph{odometer} is a pair $(X_{\G},\phi_\G)$, where $\phi_\G$ is the natural action of $\Z^d$ on 
$X_{\G}$ induced by $\phi_{G_n}^k$, $k\in\Z^d$, $n\in\N$.
The natural projection from $X_{\G}$ to $\Z^d/G_n$ is denoted by $\pi_n$.
\end{definition}

If $G_n\neq G_{n+1}$ for infinitely many $n\in\N$, then $X_\G$ is a Cantor set and $\phi_\G$ is a minimal action of $\Z^d$ on it, which is free if and only if $\cap_{n=1}^\infty G_n=\{0\}$. The action $\phi_\G$ is also isometric in the metric $d_\G$ given by 
$$
d_\G(x,y)=\sup\{0,1/n\,\,\vert\,\,\pi_n(x)\neq\pi_n(y)\},\quad x,y\in X_\G.
$$
Moreover, $X_\G$ supports a unique $\phi_\G$-invariant probability measure $\mu_\G$ such that
$$
\mu_\G\left(\pi_n^{-1}(k+G_n)\right)=\frac{1}{[\Z^d\: G_n]},\quad n\in\N,\ k\in \Z^d.
$$

\sbr

In~\cite{GPS19} the authors proposed a system equivalent to $(X_\G, \phi_\G)$ in the sense of conjugacy defined below, using Pontryagin duality. 
Namely, let $H$ be a subgroup of $\Q^d$ so that $H$ contains $\Z^d$. Let $Y_H=\widehat{H/\Z^d}$ be the Pontryagin dual of the quotient. Here, the groups $H$ and $H/\Z^d$ are endowed with the discrete topology, and hence $Y_H$ is compact. 
Let $\T^d=\R^d/\Z^d$ denote the $d$-torus and let $\rho:H/\Z^d\rightarrow\T^d$ be the map induced by the inclusion $\Q^d\hookrightarrow \R^d$, {\it i.e.}, $\rho\left((r_1,\dots, r_d)+\Z^d\right)=\left(e^{2\pi ir_1},\dots, e^{2\pi ir_d}\right)$,
$(r_1,\dots, r_d)\in H$. Identifying the Pontryagin dual of $\T^d$ with $\Z^d$, we have the dual map
$\hat\rho:\Z^d\rightarrow Y_H$. Then the action $\psi_H$ of $\Z^d$ on $Y_H$ is defined via 
$$
\psi_H^k(x)=x+\hat\rho(k),\quad k\in\Z^d,\ x\in Y_H.
$$ 
The action $(Y_H,\psi_H)$ is free if and only if $H$ is dense in $\Q^d$.
\begin{definition}
Let $X,Y$ be topological spaces, let $G$ be a group, and let $\phi$, $\psi$ be actions of $G$ on $X$, $Y$, respectively, by homeomorphisms. 
The actions $(X,\phi)$ and $(Y,\psi)$ of $G$ are said to be \emph{conjugate} if there exists a homeomorphism $h\: X\to Y$ such that 
$$
h\circ \phi^g=\psi^g\circ h,
$$  
for all $g\in G$. In this case, we refer to $h$ as a \emph{conjugacy} between the two actions.  
\end{definition}
If $H$ is an increasing union of finite index extensions $H_n$ of $\Z^d$, $n\in\N$, then, up to conjugacy, $(Y_H, \psi_H)$ is  the inverse limit of
\bbe\label{eq:lim}
(Y_{H_1}, \psi_{H_1})\xlongleftarrow{\widehat{i_1}}(Y_{H_2}, \psi_{H_2})\xlongleftarrow{\widehat{i_2}}\cdots,
\ee 
where $i_n\: H_n/\Z^d\to H_{n+1}/\Z^d$ is the inclusion, $n\in\N$.
The correspondence between $\Z^d$-odometers and $\Z^d$-actions $(Y_H,\psi_H)$, up to conjugacy, is established by realizing $(Y_H,\psi_H)$ as an inverse limit \eqref{eq:lim} and by passing to dual lattices; see~\cite[Theorems~2.5, 2.6]{GPS19}. Here, if $K$  is a lattice in $\R^d$, its dual lattice $K^*$ is
$$
K^*=\{x\in\R^d \,\,\vert\,\,\langle k,x\rangle\in\Z\ {\rm for\ all}\ k\in K\}.
$$

We now consider odometers associated to integer matrices.
For a non-singular $d\times d$-matrix $A$ with integer coefficients, $A\in\operatorname{M}_d(\Z)$, define
$$
G_A=\left.\left\{A^{-k}{x}\,\,\right\vert\, {x}\in\Z^d,\,k\in\Z \right\},\quad
\Z^d\subseteq G_A\subseteq \Q^d.
$$
One can readily check that $G_A$ is a subgroup of $\Q^d$.
Applying the process described above to the group $H=G_A$, we get an {associated} $\Z^d$-odometer $Y_{G_A}$, defined up to conjugacy. 
In \cite{s}, the second author classified groups $G_A$ in the $2$-dimensional case and applied the results to $2$-dimensional odometers $Y_{G_A}$ using 
\cite[Theorem 1.5]{GPS19}. In \cite{s1},
 it was studied when $G_A$, $G_B$ are isomorphic as abstract groups for non-singular $A,B\in\M_d(\Z)$ for an arbitrary $d$. We combine the results from \cite{s1} with the results of this paper to analyze when $Y_{G_A}$, $Y_{G_B}$ are equivalent with respect to conjugacy, isomorphism, continuously orbit equivalence, and orbit equivalence.

\sbr
\noindent
\textbf{Acknowledgements.} The authors thank Gleb Aminov for useful discussions. We also thank the anonymous referee for careful reading and valuable comments.

\section{Conjugacy and Isomorphism}
\noindent
In what follows, $\operatorname{M}_d(\Z)$ denotes the ring of $d\times d$-matrices with integer coefficients and
$\operatorname{GL}_d(\Z)$ denotes the group of non-singular matrices $A\in\operatorname{M}_d(\Z)$ with $\det A=\pm 1$.
Proposition~\ref{P:Tau} below extends~\cite[Theorem~4.4]{GPS19} to higher dimensions and its proof requires the following lemma.

\begin{lemma}\label{L:goodBasis}
Let $d\in\N$ and let $G$ be a subgroup of $\Z^d$ of finite index. 
Then for each $i=1,\dots, d$, there exists a free basis $\mathcal B=\{f_1,\dots, f_d\}$ of $G$ such that $f_i=a_i e_i$, where $a_i\in\N$ and $e_i$ is the $i$-th vector of the standard basis of $\Z^d$.  Equivalently, for any non-singular $M\in\operatorname{M}_d(\Z)$ and each $i=1,\dots, d$, there exists 
$P(i)\in\operatorname{GL}_d(\Z)$ such that the $i$-th column of $MP(i)$ is $a_ie_i$ for some $a_i\in\N$.
\end{lemma}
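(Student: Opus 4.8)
The plan is to prove the free-basis formulation directly and then read off the matrix statement by identifying the subgroup $G$ with the image $M\Z^d$ of the standard lattice under the matrix $M$ whose columns form a free basis of $G$. A change of basis $\mathcal{B}$ of $G$ corresponds precisely to right multiplication of $M$ by an element $P\in\GL_d(\Z)$, and requiring the $i$-th column of $MP$ to equal $a_ie_i$ is the same as requiring $f_i=a_ie_i$ for the $i$-th basis vector. Thus it suffices to exhibit, for each fixed $i$, a free basis of $G$ whose $i$-th vector is a positive multiple of $e_i$.

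First I would produce the candidate vector. Because $G$ has finite index in $\Z^d$, some positive multiple of $e_i$ lies in $G$ (the cosets $me_i+G$ must repeat, so a nonzero difference $(m-m')e_i$ lies in $G$), and hence the intersection $G\cap\Z e_i$ is a nonzero subgroup of the rank-one group $\Z e_i$. Therefore $G\cap\Z e_i=\Z\,a_ie_i$ for a unique $a_i\in\N$, namely the least positive integer with $a_ie_i\in G$. I set $f_i:=a_ie_i$.

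The crux of the argument is to show that $f_i$ is a \emph{primitive} element of $G$, i.e.\ that it extends to a free basis. I would argue by minimality of $a_i$: suppose $f_i=mw$ with $w\in G$ and $m\in\N$. Then $w=(a_i/m)e_i$, and since $w\in\Z^d$ the scalar $a_i/m$ is a positive integer; thus $w\in G\cap\Z e_i=\Z\,a_ie_i$, which forces $a_i/m$ to be a nonzero multiple of $a_i$ and hence $m=1$. This shows that no nontrivial multiple of another element of $G$ equals $f_i$, so $f_i$ is primitive. Equivalently, in the matrix picture one takes $p:=a_iM^{-1}e_i$ for the minimal such $a_i$, checks $p\in\Z^d$, and the same minimality argument shows that the $\gcd$ of the entries of $p$ equals $1$.

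Finally I would invoke the standard structure fact that a primitive vector in a free abelian group of finite rank can be completed to a free basis: the quotient $G/\Z f_i$ is then torsion-free, hence free of rank $d-1$, and lifting any basis of the quotient and adjoining $f_i$ yields the desired basis $\mathcal{B}=\{f_1,\dots,f_d\}$ with $f_i=a_ie_i$. The matrix statement then follows by completing the primitive column $p$ to a matrix $P\in\GL_d(\Z)$, whereupon $MP$ has $i$-th column $Mp=a_ie_i$. I expect the only genuinely substantive step to be the primitivity claim together with the completion-to-a-basis principle; the remaining work is bookkeeping and the translation between the two equivalent formulations.
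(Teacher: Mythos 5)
Your proof is correct, but it takes a genuinely different route from the paper's. The paper argues entirely by elementary integer column operations: an inner lemma runs the Euclidean algorithm on the entries of a fixed row of the basis matrix to produce a new basis whose matrix has that row equal to $(a_j,0,\dots,0)$, and then induction on $d$, with block matrices of the form $\left(\begin{smallmatrix} 1 & 0 \\ 0 & S \end{smallmatrix}\right)$, places the distinguished column where it is needed --- in effect a partial Hermite-normal-form computation, fully self-contained and algorithmic. You instead make a canonical choice: $a_i$ is the positive generator of the subgroup $G\cap \Z e_i$, which is nonzero by your (correct) coset-repetition argument, and you prove $f_i=a_ie_i$ is primitive in $G$ by minimality of $a_i$; that argument is also correct. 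The remaining weight in your proof falls on the structure-theoretic principle that a primitive element of a free abelian group of finite rank extends to a free basis, which you justify by noting that $G/\Z f_i$ is torsion-free, hence free, so the extension splits and a basis of the quotient lifts. Quoting that principle is legitimate, but note that the one step you leave implicit --- that primitivity of $f_i$ forces $G/\Z f_i$ to be torsion-free --- itself needs a short B\'ezout argument: if $ng=kf_i$ with $n\ge 1$, divide by $\gcd(n,k)$ to get $n'g=k'f_i$ with $\gcd(n',k')=1$, choose $u,v$ with $un'+vk'=1$, and then $f_i=n'(uf_i+vg)$, so primitivity gives $n'=1$ and $g\in\Z f_i$. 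What your route buys is brevity, no induction, and an intrinsic characterization of $a_i$ as the least positive integer with $a_ie_i\in G$; what the paper's route buys is complete self-containedness (nothing beyond integer matrix manipulations) and an explicit procedure for computing the matrix $P(i)$.
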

\begin{proof}
Since $G$ is a subgroup of $\Z^d$ of finite index, $G$ is a free group of rank $d$. Thus, the columns of $M$ form a basis of $G$.
We first prove the following:
\begin{lemma}\label{L:small}
For any $j$ there exists a basis $g_1,\dots, g_d$ of $G$ such that the $j$-th component $[g_l]_j$
of $g_l$ is zero for any $l=2,\ldots, d$ and $[g_1]_j=a_j$, $a_j\in\N$. Equivalently, the $j$-th row of the matrix $\left(\begin{matrix}
g_1 & g_2 & \ldots & g_d
\end{matrix}
\right)$ $($we write coordinates of each $g_i$ as a column$)$ is $\left(\begin{matrix}
a_j & 0 & \ldots & 0
\end{matrix}
\right)$.
\end{lemma}
\begin{proof}[Proof of Lemma \ref{L:small}]
Let $g_1,\dots, g_d$ be an arbitrary basis of $G$ and
let 
$$
M=\left(\begin{matrix}
g_1 & g_2 & \ldots & g_d
\end{matrix}
\right)=(g_{ik}), 
$$
$
M\in\operatorname{M}_d(\Z)$ is non-singular. Note that there exists at least one non-zero element in the $j$-th row of $M$, since
$\det M\ne 0$. Assume there are two non-zero elements in the $j$-th row of $M$. Without loss of generality (we can interchange columns of $M$), $g_{j1}\ne 0$, $g_{j2}\ne 0$. By multiplying $g_1$, $g_2$ by $-1$ if necessary, we can assume $g_{j1}> 0$, $g_{j2}> 0$. If $g_{j1}=g_{j2}$, then 
 $g'_1,\dots, g'_d$ is a new basis of $G$ with $g'_2=g_2-g_1$, $g'_l=g_l$, $l\ne 2$, and $[g'_2]_j=0$.
 If $g_{j1}\ne g_{j2}$, then without loss of generality, we can assume $g_{j1}> g_{j2}$.
 Let $a_1=g_{j1}$, $a_2=g_{j2}$, and $a_1=a_2 k+a_3$, for some $k,a_3\in\Z$, $0\leq a_3<a_2$. Then 
 $g'_1,\dots, g'_d$ is a new basis of $G$ with $g'_1=g_2$, $g'_2=g_1-kg_2$, $g'_l=g_l$, $l\ne 1,2$, and 
 $[g'_1]_j=a_2$, $[g'_2]_j=a_3$, $0\leq a_3<a_2<a_1$. Continuing this way, we get a sequence
 $0\leq\cdots<a_h<\cdots< a_3<a_2<a_1$ of non-negative integer numbers, which in finitely many steps has to reach zero (this is essentially the Euclidean algorithm). This shows that there exists a basis
 $g'_1,\dots, g'_d$ of $G$ such that $[g'_2]_j=0$. Repeating the process for any other
 $g'_s$, $g'_t$  with $s\ne t$, $[g'_s]_j\ne 0$, $[g'_t]_j\ne 0$, we conclude that there exists a basis 
 $g''_1,\dots, g''_d$ of $G$ such that $[g''_1]_j\in\N$, $[g'_l]_j=0$ for any $l\ne 1$.
\end{proof}
We now use induction on $d$ to prove Lemma \ref{L:goodBasis}. Let $d=1$. Then $e_1=1$, $G=a\Z$ for some $a>0$, $f_1=ae_1$, and the claim follows. Let $d>1$. We consider two cases: $i>1$ and $i=1$. Let $i>1$. By Lemma \ref{L:small} applied to $j=1$, there exists a basis $g_1,\dots, g_d$ of $G$ such that
$$
M=\left(\begin{matrix}
g_1 & \ldots & g_d
\end{matrix}
\right)=\left(\begin{matrix}
a_1 & 0  \\
* & M'
\end{matrix}
\right),\quad\quad a_1\in\N,\,\,M'\in \operatorname{M}_{d-1}(\Z),\,\,\det M'\ne 0.
$$
By induction on $d$, there exists $S\in\operatorname{GL}_{d-1}(\Z)$ such that 
the $(i-1)$-st column of $M'S$ is $be'_{i-1}$, where $b\in\N$ and $e'_{i-1}$ is the
 $(i-1)$-st vector of the standard basis of $\Z^{d-1}$. 
 Let 
$$
P=\left(\begin{matrix}
1 & 0  \\
0 & S
\end{matrix}
\right)\in\operatorname{GL}_{d}(\Z).
$$
Then the $i$-th column of $MP$
is $be_i$ and the claim follows for $i>1$.

\sbr

Let $i=1$. By Lemma \ref{L:small} applied to $j=d$, there exists a basis $g_1,\dots, g_d$ of $G$ such that
$$
M=\left(\begin{matrix}
g_d & \ldots & g_1
\end{matrix}
\right)=\left(\begin{matrix}
M'' & *  \\
0 & a_d
\end{matrix}
\right),\quad\quad a_d\in\N,\,\,M''\in \operatorname{M}_{d-1}(\Z),\,\,\det M''\ne 0.
$$
As in the case $i>1$, we apply the induction on $d$ to $M''$. Thus, there exists
$S'\in\operatorname{GL}_{d-1}(\Z)$ such that 
the $1$-st column of $M''S'$ is $b'e'_{1}$, where $b'\in\N$ and $e'_{1}$ is the
 $1$-st vector of the standard basis of $\Z^{d-1}$. Let 
$$
P'=\left(\begin{matrix}
S' & 0  \\
0 & 1
\end{matrix}
\right)\in\operatorname{GL}_{d}(\Z).
$$
Then the $1$-st column of $MP'$
is $b'e_1$ 
and the claim follows in the case $i=1$ as well.
\end{proof}

For a system $(X,\phi)$, where $X$ is a topological space and $\phi$ is an action of $\Z^d$ on $X$ by homeomorphisms, the first cohomology group $H^1(X,\phi)$ is defined as follows. A 1-cocycle $\theta$ is a continuous function $\theta\: X\times\Z^d\to\Z$ such that
$$
\theta(x,m+n)=\theta(x,m)+\theta(\phi^m(x), n),\quad x\in X,\ m,n\in\Z^d.
$$
A 1-cocycle $\theta$ is a coboundary if and only if there exists a continuous function $h\: X\to\Z$ such that 
$$
\theta(x,n)=h\left(\phi^n(x)\right)-h(x),\quad x\in X,\ n\in\Z^d.
$$
Let $Z^1(X,\phi)$ and $B^1(X,\phi)$ denote the groups of 1-cocycles and coboundaries, respectively, and let $H^1(X,\phi)=Z^1(X,\phi)/B^1(X,\phi)$ be the first cohomology group.  

\sbr

We now recall the definition of the map $\tau_\mu^1$, where $\mu$ is an invariant probability measure on
a $\Z^d$-action $(X,\phi)$. If $\theta$ is a 1-cocycle, 
$\tau_\mu^1(\theta)\in{\rm Hom}(\Z^d,\R)$ is given by
$$
\tau_\mu^1(\theta)(n)=\int_X \theta(x,n)\, d\mu(x),\quad n\in \Z^d.
$$  
Since $\tau_\mu^1(\theta)=0$ if $\theta$ is a coboundary, $\tau_\mu^1$ passes to a well-defined group homomorphism
\bbe\label{eq:uh1}
\tau_\mu^1\: H^1(X,\phi)\to {\rm Hom}(\Z^d,\R).
\ee
The space ${\rm Hom}(\Z^d,\R)$ is identified with $\R^d$ via the map that takes $\alpha\in{\rm Hom}(\Z^d,\R)$ to $(\alpha(e_1),\dots, \alpha(e_d))\in \R^d$.   

\sbr

We also denote by $B_\mu^1(X,\phi)$ the group of 1-cocycles $\theta(x,n)\in Z^1(X,\phi)$ such that 
$$
\tau_\mu^1(\theta)=0,
$$
and by $H_\mu^1(X,\phi)$ the group $Z^1(X,\phi)/B_\mu^1(X,\phi)$. The group $H_\mu^1(X,\phi)$ is a quotient of $H^1(X,\phi)$. By definition of $B_\mu^1(X,\phi)$, the map $\tau_\mu^1$ factors through the quotient $H_\mu^1(X,\phi)$, and, by abuse of notation, we denote the resulting map also by $\tau_\mu^1$:
\bbe\label{eq:uh2}
\tau_\mu^1\: H^1_{\mu}(X,\phi)\to {\rm Hom}(\Z^d,\R).
\ee

\begin{proposition}\label{P:Tau}
Let $d\in\N$ and let $H$ be a dense subgroup of $\Q^d$ such that $\Z^d\subseteq H$. Let $\mu$ be the unique invariant probability measure for  $\Z^d$-action $(Y_H, \psi_H)$. Then the map
$$
\tau_\mu^1\: H_\mu^1(Y_H,\psi_H)\to H
$$
given by \eqref{eq:uh2} is an isomorphism.
\end{proposition}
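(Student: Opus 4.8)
The plan is to treat the two directions separately and to note that injectivity is essentially built into the definitions. Indeed, $B_\mu^1(Y_H,\psi_H)$ is by construction the group of $1$-cocycles $\theta$ with $\tau_\mu^1(\theta)=0$, so the induced map on $H_\mu^1=Z^1/B_\mu^1$ has trivial kernel and is automatically injective. Hence the whole content of Proposition~\ref{P:Tau} is the surjectivity statement: the image of $\tau_\mu^1$, regarded as a subgroup of $\mathrm{Hom}(\Z^d,\R)\cong\R^d$, equals $H\subseteq\Q^d\subseteq\R^d$.

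To identify this image, I would first reduce to finite levels. Writing $H=\bigcup_n H_n$ as an increasing union of finite-index extensions of $\Z^d$, the action $(Y_H,\psi_H)$ is conjugate to the odometer $(X_\G,\phi_\G)$ with $\G=\{G_n\}$, where $G_n=H_n^{*}$ is a decreasing sequence of finite-index subgroups of $\Z^d$ and $G_n^{*}=H_n$, via the dual-lattice correspondence of \cite[Theorems~2.5, 2.6]{GPS19}. A conjugacy sends $\mu$ to the unique invariant measure and intertwines the maps $\tau_\mu^1$, so it is enough to compute the image on $(X_\G,\phi_\G)$. Since $X_\G$ is a Cantor set and $\Z^d$ is discrete, every continuous $1$-cocycle is locally constant in its first argument and therefore factors through a finite quotient $\Z^d/G_n$; consequently $Z^1(X_\G,\phi_\G)=\varinjlim_n Z^1(\Z^d/G_n,\phi_{G_n})$ and the image of $\tau_\mu^1$ is the union over $n$ of the images at the finite levels.

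It remains to compute, for a single finite-index subgroup $G\subseteq\Z^d$ with uniform measure $\mu$ on $\Z^d/G$, the image of $\tau_\mu^1$. On the transitive system $\Z^d/G$ a cocycle $\theta$ is determined by $F(n)=\theta(0+G,n)$, and descent to the quotient forces $F(m+g)-F(m)=\lambda(g)$ for all $m\in\Z^d$ and $g\in G$, for some $\lambda\in\mathrm{Hom}(G,\Z)$; conversely each such $\lambda$ comes from a genuine integer-valued cocycle. Averaging $\theta(\,\cdot\,,n)$ over a set of coset representatives and using that $[\Z^d:G]\cdot n\in G$ for every $n$, one finds $\tau_\mu^1(\theta)(n)=\langle v,n\rangle$, where $v\in\R^d$ represents the $\R$-linear extension of $\lambda$. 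As $\lambda$ runs over $\mathrm{Hom}(G,\Z)$ the vector $v$ runs over the dual lattice $G^{*}$, so the finite-level image is exactly $G_n^{*}=H_n$; taking the union over $n$ gives $H$, as claimed.

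The main obstacle is carrying out this finite-level computation correctly in every dimension, namely showing that the image is \emph{precisely} $G^{*}$ --- both that no larger subgroup occurs and that every element of $G^{*}$ is realized by an honest cocycle. This is exactly where Lemma~\ref{L:goodBasis} is used: for each $i$ it supplies an axis-aligned vector $f_i=a_ie_i\in G$, which pins down the $i$-th coordinate of $G^{*}$ to $\tfrac{1}{a_i}\Z$ and lets one build the required cocycles one coordinate direction at a time, reducing each to a one-dimensional (cyclic) computation. It is this coordinatewise reduction that removes the dimension restriction $d\le 2$ present in \cite{GPS19}.
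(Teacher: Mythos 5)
Your proposal is correct, and at the decisive step it takes a genuinely different route from the paper. The two arguments agree up to the reduction: injectivity is built into the definition of $B_\mu^1$, one writes $H=\bigcup_n H_n$, passes to the dual odometer $(X_\G,\phi_\G)$ with $G_n=H_n^*$, and computes $H^1$ as the direct limit of the finite-level groups $H^1(\Z^d/G_n,\phi_{G_n})$, on which cocycles correspond to homomorphisms $\lambda\in\mathrm{Hom}(G_n,\Z)$ (you assert the converse realization of each $\lambda$ by a cocycle; that is exactly \cite[Lemma~4.2]{GPS19}, which the paper also cites, or a one-line explicit construction, so this is only a presentational gap). The divergence is in evaluating $\tau_\mu^1$ at a fixed level. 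The paper builds an explicit cocycle for a given $h\in H_n$, invokes Lemma~\ref{L:goodBasis} to choose a basis of $G_n$ whose $i$-th vector is $ae_i$, takes the corresponding parallelepiped fundamental domain $F$, and evaluates $\sum_{k\in F}\theta(k+G_n,e_i)$ by counting the $k$ that exit $F$ in the $i$-th direction; the good-basis lemma is precisely what makes that count feasible in every dimension, and it is the paper's main technical addition. You instead use that $\tau_\mu^1(\theta)$ is a homomorphism on $\Z^d$, that $mn\in G_n$ for $m=[\Z^d\colon G_n]$ and every $n\in\Z^d$, and that $\theta(\cdot,g)$ is the constant $\lambda(g)$ for $g\in G_n$; hence $m\,\tau_\mu^1(\theta)(n)=\tau_\mu^1(\theta)(mn)=\lambda(mn)=m\langle v,n\rangle$ with $v\in G_n^*$ representing $\lambda$, so $\tau_\mu^1(\theta)=v$ and the finite-level image is exactly $G_n^*=H_n$. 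This is shorter, basis-free, uniform in $d$, and settles well-definedness (image in $H$) and surjectivity in one stroke; it makes Lemma~\ref{L:goodBasis} unnecessary for Proposition~\ref{P:Tau}. What the paper's route buys in exchange is the good-basis lemma itself and concrete cocycles, which have independent interest.

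One caveat: your closing paragraph contradicts the argument you just gave. After the averaging computation, no appeal to Lemma~\ref{L:goodBasis} is needed, and the role you assign it is inaccurate: knowing $a_ie_i\in G_n$ only yields the inclusion $G_n^*\subseteq\prod_i\frac{1}{a_i}\Z$, which pins down neither $G_n^*$ nor the image, and no coordinatewise reduction is required. Your third paragraph is already a complete proof; strike the last paragraph or recast it as a remark on the paper's alternative method.
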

\begin{proof}
We supplement the proof of~\cite[Theorem~4.4]{GPS19} with Lemma~\ref{L:goodBasis} above. This allows to simplify computations and also extends \cite[Theorem~4.4]{GPS19} to an arbitrary dimension.

\sbr

It is immediate from the definitions above that $\tau_\mu^1$ is an injective group homomorphism. 
We write $H=\cup_{n\in\N} H_n$, the union of an increasing sequence of finite index extensions $H_n$ of $\Z^d$. Let $G_n=H_n^*$ be the dual lattice. Then $\mathcal G=\{G_1, G_2,\dots\}$ is a decresing sequence of finite index subgroups of $\Z^d$ such that $(Y_H,\psi_H)$ and $(X_{\mathcal G},\phi_{\mathcal G})$ are conjugate.  

\sbr

The 
group $H^1(Y_H,\psi_H)$ is the direct limit of the sequence $H^1(\Z^d/G_n,\phi_{G_n})$, where $\phi_{G_n}$ is the natural action of $\Z^d$ on $\Z^d/G_n$ given by \eqref{eq:act}, $n\in\N$. Therefore, if $[\theta]\in H_\mu^1(Y_H,\psi_H)$, then $\theta$ is a cocycle in $Z^1(\Z^d/G_n,\phi_{G_n})$ for some $n$. We can write
\begin{equation}\label{E:Form}
\tau_\mu^1(\theta)=[\Z^d\: G_n]^{-1}\sum_{k\in F} \left(\theta(k+G_n, e_1), \dots,\theta(k+G_n, e_d)\right),
\end{equation}
where $F$ is a fundamental domain for $G_n$.  Since $\theta(k+G_n, e_i)\in\Z$ for any $i=1,\dots, d$, we have
$\tau_\mu^1(\theta)\in H_n$ and hence $\tau_\mu^1(\theta)\in H$. 

\sbr

Clearly, to show that $\tau_\mu^1\: H_\mu^1(Y_H,\psi_H)\to H$ given by \eqref{eq:uh2} is surjective, it is enough  to show that $\tau_\mu^1\: H^1(Y_H,\psi_H)\to H$ given by \eqref{eq:uh1}  is surjective.
Let $h\in H$. Then $h\in H_n$ for some $n\in\N$. We show that there exists $[\theta]\in  H^1(Y_H,\psi_H)$ such that $\tau_\mu^1(\theta)=h$.
According to~\cite[Lemma~4.2]{GPS19}, for each $\theta\in Z^1(\Z^d/G_n,\phi_{G_n})$, the map $\alpha(\theta)$ given by
$$
\alpha(\theta)(g)=\theta(G_n,g),\quad g\in G_n,
$$
induces an isomorphism
$\alpha\: H^1(\Z^d/G_n,\phi_{G_n})\to {\rm Hom}(G_n,\Z)$. The group ${\rm Hom}(G_n,\Z)$ is identified with $H_n$ via the canonical inner product $\langle\cdot\,,\cdot\rangle$ on $\R^d$, and thus  there exists $\theta\in Z^1(\Z^d/G_n,\phi_{G_n})$ with $\alpha(\theta)=\langle\cdot\,,h\rangle$. 

\sbr

It remains to show $\tau_\mu^1(\theta)=h$, {\it i.e.},
\begin{equation}\label{E:Form2}
\sum_{k\in F} \theta(k+G_n, e_i)=[\Z^d\: G_n]\langle e_i, h\rangle\quad {\rm for\ each}\ i=1,\dots, d.
\end{equation}
For each fixed $i=1,\dots,d$, we apply Lemma~\ref{L:goodBasis} to find a basis $\mathcal B=\{f_1,\dots, f_d\}$ for $G_n$ such that $f_i=a e_i$, where $a\in\N$. 
We choose a fundamental domain $F$ for $G_n$ in Equation~\eqref{E:Form2} to consist of elements from $\Z^d$ inside a parallelepiped in $\R^d$ determined by the basis $\mathcal B$. Namely,
$$
F=\{t_1f_1+\dots +t_d f_d\,\,\vert\,\,0\le t_1,\dots, t_d<1\}\cap \Z^d.
$$ 

The map $\theta$ is defined by 
$$
\theta(k_1+G_n, k_2+g)=\langle g',h\rangle,
$$
for $k_1, k_2\in F, g\in G_n$, where $g'\in G_n$ is the unique element such that $k_1+k_2+g=k'+g'$ and $k'\in F$. 
One can check that so defined, $\theta$ is a 1-cocycle.
To compute the left-hand side of~\eqref{E:Form2}, we choose $k_1=k\in F$, $k_2= e_i$, and $g=0$. Note that $k_2\in F$, because of our choice of $F$. If $k\in F$ is such that $k+ e_i$ is also in $F$, then $g'=0$, and thus the term $\theta(k+G_n, e_i)$ does not contribute anything to the sum in~\eqref{E:Form2}.

\sbr

Now assume $k\in F$ is such that $k+ e_i$ is not in $F$. Let $F_i$ denote the set of all such $k$. 
In this case $g'=f_i$, because $k'=k+ e_i-f_i\in F$. Indeed, let $P_{k,i}$ consist of all elements $n\in F$ such that for each $j=1,\dots, d,\ j\neq i$, the $j$-th component of $n$ coincides with the $j$-th component of $k$. Since $F$ is a parallelepiped with side $f_i=a e_i,\ a\in\N$, the $i$-th coordinates of elements $n\in P_{k,i}$ form a sequence of $a$ consecutive integers $m, m+1,\dots, m+a-1$.  
Now, by the choice of $k$,  its $i$-th component is $m+a-1$. For $j=1,\dots, d,\ j\neq i$, the $j$-th component of $k'$ equals the $j$-th component of $k$. The $i$-th component of $k'$ equals $m$, and thus $k'\in P_{k,i}\subseteq F$. 

\sbr

We conclude that $\theta(k+G_n, e_i)=\langle f_i, h\rangle=a\langle e_i, h\rangle$ for each $k\in F_i$. 
The number of elements $k$ in $F_i$ equals the number of elements in the projection $P_i$ of $F$ to $\Z^{d-1}$ obtained from $\Z^d$ by omitting the $i$-th coordinate. 
Indeed, by omitting the $i$-th coordinate of $k\in F_i\subseteq F$, we get an element in $P_i$. Conversely, for any $k_i\in P_i$, the $i$-th coordinates of all elements in $F$ that project to $k_i$ form a sequence $m, m+1,\dots, m+a-1$, since $F$ is a parallelepiped with side $f_i= a e_i,\ a\in\N$. The element $k$ that projects to $k_i$ and whose $i$-th coordinate is $m+a-1$ is in $F_i$.   
The projection $P_i$ is itself a parallelepiped in $\Z^{d-1}$. Let $b$ denote the number of elements in $P_i$, which is the same as the number of elements in $F_i$. We therefore conclude 
$$
\sum_{k\in F}\theta(k+G_n, e_i)=\sum_{k\in F_i}\theta(k+G_n, e_i)=ab\langle e_i, h\rangle.
$$
Note that $ab$ is the number of elements in $F$, which is equal to $[\Z^d\: G_n]$. Thus, \eqref{E:Form2} holds.
\end{proof}

A characterization of a $\Z^d$-action $(Y_H,\psi_H)$ up to conjugacy now follows from Proposition~\ref{P:Tau} and the following elementary lemma.

\begin{lemma}\label{L:Conj}
Let $d\in\N$. 
If $\Z^d$-actions $(Y_{H_1},\psi_{H_1})$ and $(Y_{H_2},\psi_{H_2})$ are conjugate, then 
$$
\tau_{\mu_1}^1\left(H_{\mu_1}^1(Y_{H_1},\psi_{H_1})\right)
=\tau_{\mu_2}^1\left(H_{\mu_2}^1(Y_{H_2},\psi_{H_2})\right),
$$
where $\mu_1, \mu_2$ are the unique probability measures for $(Y_{H_1},\psi_{H_1}), (Y_{H_2},\psi_{H_2})$, respectively.
\end{lemma}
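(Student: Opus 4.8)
The plan is to show that a conjugacy $h\: Y_{H_1}\to Y_{H_2}$ induces, by pullback of cocycles, an isomorphism of the relevant cohomology groups that intertwines the two maps $\tau_{\mu_1}^1$ and $\tau_{\mu_2}^1$; the equality of images is then immediate.

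First I would define, for a $1$-cocycle $\theta\in Z^1(Y_{H_2},\psi_{H_2})$, the pullback $(h^*\theta)(x,n)=\theta(h(x),n)$, $x\in Y_{H_1}$, $n\in\Z^d$. Using the conjugacy relation $h\circ\psi_{H_1}^m=\psi_{H_2}^m\circ h$ together with the cocycle identity for $\theta$, one checks that $h^*\theta$ satisfies the cocycle identity on $(Y_{H_1},\psi_{H_1})$, so that $h^*\: Z^1(Y_{H_2},\psi_{H_2})\to Z^1(Y_{H_1},\psi_{H_1})$ is a well-defined group homomorphism. Since $h$ is a homeomorphism, $(h^{-1})^*$ is a two-sided inverse, and hence $h^*$ is an isomorphism. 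The same computation applied to a continuous $f\: Y_{H_2}\to\Z$ shows that $h^*$ carries the coboundary $f\circ\psi_{H_2}^n-f$ to $(f\circ h)\circ\psi_{H_1}^n-(f\circ h)$, so $h^*$ restricts to an isomorphism $B^1(Y_{H_2},\psi_{H_2})\to B^1(Y_{H_1},\psi_{H_1})$.

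Next I would treat the measures. The pushforward $h_*\mu_1$ is a Borel probability measure on $Y_{H_2}$, and from $h^{-1}\circ\psi_{H_2}^g=\psi_{H_1}^g\circ h^{-1}$ it follows that $h_*\mu_1$ is $\psi_{H_2}$-invariant; by uniqueness of the invariant probability measure this forces $h_*\mu_1=\mu_2$. Then for every $\theta\in Z^1(Y_{H_2},\psi_{H_2})$ and every $n\in\Z^d$ the change-of-variables formula gives
$$
\tau_{\mu_1}^1(h^*\theta)(n)=\int_{Y_{H_1}}\theta(h(x),n)\,d\mu_1(x)=\int_{Y_{H_2}}\theta(y,n)\,d\mu_2(y)=\tau_{\mu_2}^1(\theta)(n),
$$
that is, $\tau_{\mu_1}^1\circ h^*=\tau_{\mu_2}^1$ on $Z^1(Y_{H_2},\psi_{H_2})$. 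In particular $h^*$ maps $B_{\mu_2}^1=\ker\tau_{\mu_2}^1$ onto $B_{\mu_1}^1=\ker\tau_{\mu_1}^1$, hence descends to an isomorphism $H_{\mu_2}^1(Y_{H_2},\psi_{H_2})\to H_{\mu_1}^1(Y_{H_1},\psi_{H_1})$ compatible with the $\tau^1$-maps.

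Finally, since $h^*$ is surjective and $\tau_{\mu_1}^1\circ h^*=\tau_{\mu_2}^1$, the two images coincide:
$$
\tau_{\mu_2}^1\left(H_{\mu_2}^1(Y_{H_2},\psi_{H_2})\right)=\tau_{\mu_1}^1\left(h^*\,H_{\mu_2}^1(Y_{H_2},\psi_{H_2})\right)=\tau_{\mu_1}^1\left(H_{\mu_1}^1(Y_{H_1},\psi_{H_1})\right),
$$
as required. I do not expect a serious obstacle: the content is entirely formal. The one step that requires genuine care is the identification $h_*\mu_1=\mu_2$, where I must invoke the uniqueness of the invariant probability measure rather than any explicit description of it; all other verifications are direct substitutions using the intertwining relation $h\circ\psi_{H_1}^g=\psi_{H_2}^g\circ h$ and the continuity of $h$ and $h^{-1}$.
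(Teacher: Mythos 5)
Your proposal is correct and follows essentially the same route as the paper: pull back cocycles via $h^*(\theta)(x,n)=\theta(h(x),n)$, use invariance of $\mu_1$ and uniqueness of $\mu_2$ to get $h_*\mu_1=\mu_2$, and then apply change of variables to obtain $\tau_{\mu_1}^1\circ h^*=\tau_{\mu_2}^1$, which gives equality of images. The only difference is that you spell out the routine verifications (cocycle identity, coboundaries, descent to the quotient) that the paper leaves implicit.
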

\begin{proof}
Let $h$ be a conjugacy from $(Y_{H_1},\psi_{H_1})$ to $(Y_{H_2},\psi_{H_2})$. It induces an isomorphism 
$$
h^*\: H_{\mu_2}^1(Y_{H_2},\psi_{H_2})\to H_{\mu_1}^1(Y_{H_1},\psi_{H_1})
$$ 
given by $h^*([\theta])=[h^*(\theta)]$, where $h^*(\theta(y,n))=\theta(h(x), n)$, $y=h(x)$, $x\in Y_{H_1}$. 
The invariance of $\mu_1$ and uniqueness of $\mu_2$ imply $h_*(\mu_1)=\mu_2$, where $h_*(\mu_1)$ denotes the push-forward measure of $\mu_1$ under $h$.
Therefore, for $[\theta(y, n)]\in H_{\mu_2}^1(Y_{H_2},\psi_{H_2})$, one has
$$
\begin{aligned}
\tau_{\mu_2}^1(\theta)(n)&=\int_{Y_{H_2}} \theta(y,n)\, d\mu_2=\int_{Y_{H_2}} \theta(y,n)\, dh_*(\mu_1)
\\
&=\int_{Y_{H_1}} \theta(h(x),n)\, d\mu_1=\tau_{\mu_1}^1(h^*(\theta))(n).
\end{aligned}
$$ 
\end{proof}

\begin{corollary}\label{C:Conj}
Let $H_1,H_2$ be dense subgroups of $\Q^d$ such that $\Z^d\subseteq H_1$, $\Z^d\subseteq H_2$. 
Two $\Z^{d}$-actions $(Y_{H_1}, \psi_{H_1})$ and $(Y_{H_2}, \psi_{H_2})$ are conjugate if and only if   $H_1=H_2$.
\end{corollary}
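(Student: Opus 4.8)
The plan is to deduce the corollary directly by combining Proposition~\ref{P:Tau} with Lemma~\ref{L:Conj}; almost all of the real work has already been absorbed into those two results, so the argument is essentially a bookkeeping step identifying the relevant invariant.

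The backward implication is immediate and requires no machinery. If $H_1=H_2$, then the space $Y_{H_i}=\widehat{H_i/\Z^d}$ and the action $\psi_{H_i}$ depend only on the group $H_i$, so $(Y_{H_1},\psi_{H_1})$ and $(Y_{H_2},\psi_{H_2})$ are literally the same system and the identity map is a conjugacy.

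For the forward implication, I would argue as follows. Suppose $(Y_{H_1},\psi_{H_1})$ and $(Y_{H_2},\psi_{H_2})$ are conjugate. By Lemma~\ref{L:Conj} the images of the two maps $\tau^1_{\mu_i}$ coincide inside $\operatorname{Hom}(\Z^d,\R)$. By Proposition~\ref{P:Tau}, for each $i=1,2$ the map $\tau^1_{\mu_i}\colon H^1_{\mu_i}(Y_{H_i},\psi_{H_i})\to H_i$ is an isomorphism, so its image is precisely $H_i$, viewed as a subgroup of $\Q^d\subseteq\R^d$. Comparing the two images then yields $H_1=H_2$, as desired.

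The only step that genuinely needs care, rather than being an obstacle, is making sure both invariants live in the same ambient object so that the equality in Lemma~\ref{L:Conj} can be compared with the identification in Proposition~\ref{P:Tau}. Here both maps $\tau^1_{\mu_i}$ take values in $\operatorname{Hom}(\Z^d,\R)$, which is identified with $\R^d$ via $\alpha\mapsto(\alpha(e_1),\dots,\alpha(e_d))$ using the fixed standard basis; under this same identification Proposition~\ref{P:Tau} realizes each image as $H_i\subseteq\Q^d\subseteq\R^d$. Thus the equality of images is an honest equality of subsets of $\R^d$, and the conclusion follows without any further estimate or construction.
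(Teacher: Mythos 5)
Your proposal is correct and follows exactly the paper's own argument: the identity map handles the backward direction, and the forward direction combines Lemma~\ref{L:Conj} (equality of images of $\tau^1_{\mu_i}$) with Proposition~\ref{P:Tau} (each image equals $H_i$ inside $\operatorname{Hom}(\Z^d,\R)\cong\R^d$). Your extra remark about checking that both invariants are compared inside the same ambient space is a sensible clarification of a point the paper leaves implicit, but it does not change the route.
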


\begin{proof}
If $H_1=H_2$, then the conjugacy is trivial. Assume $(Y_{H_1}, \psi_{H_1})$ and $(Y_{H_2}, \psi_{H_2})$ are conjugate. By Proposition~\ref{P:Tau},  
$$
\tau_{\mu_1}^1\left(H_{\mu_1}^1(Y_{H_1}, \psi_{H_1})\right)=H_1, \quad  \tau_{\mu_2}^1\left(H^1_{\mu_2}(Y_{H_2}, \psi_{H_2})\right)=H_2,
$$
where $\mu_1$ and $\mu_2$ are the unique invariant probability measures for $(Y_{H_1}, \psi_{H_1})$ and $(Y_{H_2}, \psi_{H_2})$, respectively. By Lemma~\ref{L:Conj}, we have $H_1=H_2$.
\end{proof}

\begin{definition}
Let $(X,\phi)$ be an action of a group $G$ and let $(Y,\psi)$ be an action of a group $H$. An \emph{isomorphism} between the actions is a pair $(h,\alpha)$, where $h\: X\to Y$ is a homeomorphism and $\alpha\: G\to H$ is a group isomorphism, such that
$$
h\circ \phi^g=\psi^{\alpha(g)}\circ h.
$$
If such a pair $(h,\alpha)$ exists, then $(X,\phi)$ and $(Y,\psi)$ are said to be \emph{isomorphic}.
\end{definition}

\begin{proposition}\label{P:Iso}
Let $d_1,d_2\in\N$ and let $H_1$ $($resp., $H_2)$ be a dense subgroup of $\Q^{d_1}$ $($resp., $\Q^{d_2})$ that contains 
$\Z^{d_1}$ $($resp., $\Z^{d_2})$.  
A $\Z^{d_1}$-action $(Y_{H_1}, \psi_{H_1})$ is isomorphic to a $\Z^{d_2}$-action $(Y_{H_2}, \psi_{H_2})$ if and only if $d_1=d_2$, the common value being denoted by $d$, and there exists $A\in {\rm GL}_d(\Z)$ such that $A H_1=H_2$.
\end{proposition}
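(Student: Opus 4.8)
The plan is to deduce both implications from the naturality of the invariant $\tau_\mu^1$ established in Proposition~\ref{P:Tau}, treating the statement as a twisted version of Lemma~\ref{L:Conj} in which the group isomorphism $\alpha$ is allowed to be nontrivial. For the forward (``only if'') direction, suppose $(h,\alpha)$ is an isomorphism, so that $h\colon Y_{H_1}\to Y_{H_2}$ is a homeomorphism and $\alpha\colon\Z^{d_1}\to\Z^{d_2}$ is a group isomorphism with $h\circ\psi_{H_1}^m=\psi_{H_2}^{\alpha(m)}\circ h$. Since free abelian groups of distinct ranks are not isomorphic, $d_1=d_2=:d$, and $\alpha$ is given by a matrix $A\in\GL_d(\Z)$. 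I would then pull cocycles back by $(h,\alpha)$, setting $((h,\alpha)^*\theta)(x,n)=\theta(h(x),\alpha(n))$; the intertwining relation shows this is again a $1$-cocycle, giving a homomorphism $(h,\alpha)^*\colon Z^1(Y_{H_2},\psi_{H_2})\to Z^1(Y_{H_1},\psi_{H_1})$.

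The engine of the argument is the transformation law for $\tau_\mu^1$. Because $\alpha$ is surjective, the intertwining relation together with uniqueness of the invariant measure forces $h_*\mu_1=\mu_2$ (exactly as in Lemma~\ref{L:Conj}, but now pulling the index back through $\alpha^{-1}$); integrating then yields $\tau_{\mu_1}^1((h,\alpha)^*\theta)(n)=\tau_{\mu_2}^1(\theta)(\alpha(n))$ for all $n\in\Z^d$. Under the identification ${\rm Hom}(\Z^d,\R)=\R^d$ this reads $\tau_{\mu_1}^1\circ(h,\alpha)^*=A^{\mathsf T}\circ\tau_{\mu_2}^1$. Since $A^{\mathsf T}$ is invertible, $(h,\alpha)^*$ carries $B_{\mu_2}^1$ into $B_{\mu_1}^1$ and descends to an isomorphism $H_{\mu_2}^1\to H_{\mu_1}^1$; comparing images and invoking Proposition~\ref{P:Tau} (which identifies these images with $H_1$ and $H_2$) gives $H_1=A^{\mathsf T}H_2$, that is, $BH_1=H_2$ with $B=(A^{\mathsf T})^{-1}\in\GL_d(\Z)$.

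For the reverse (``if'') direction I would build the isomorphism explicitly via Pontryagin duality. Given $A\in\GL_d(\Z)$ with $AH_1=H_2$, the equality $A\Z^d=\Z^d$ shows that $A$ induces an isomorphism $\bar A\colon H_1/\Z^d\to H_2/\Z^d$ and an automorphism $A_\T$ of $\T^d$ satisfying $\rho_2\circ\bar A=A_\T\circ\rho_1$. Dualizing, and using that the dual of $A_\T$ on $\widehat{\T^d}=\Z^d$ is $A^{\mathsf T}$, one obtains $\widehat{\bar A}\circ\hat\rho_2=\hat\rho_1\circ A^{\mathsf T}$. I would then set $h=(\widehat{\bar A})^{-1}\colon Y_{H_1}\to Y_{H_2}$, a homeomorphism as the dual of an isomorphism of discrete groups, together with $\alpha=(A^{\mathsf T})^{-1}\in\GL_d(\Z)$; since $h$ is a group homomorphism, a direct computation gives $h(x+\hat\rho_1(n))=h(x)+\hat\rho_2(\alpha(n))$, i.e. $h\circ\psi_{H_1}^n=\psi_{H_2}^{\alpha(n)}\circ h$, so $(h,\alpha)$ is an isomorphism of actions.

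The main obstacle I anticipate is the dual/transpose bookkeeping: correctly matching the matrix $A$ acting on the lattices $H_i\subseteq\Q^d$ with the group automorphism $\alpha$ of $\Z^d$ (which turns out to be the inverse transpose), and verifying $h_*\mu_1=\mu_2$ for a twisted intertwiner rather than a genuine conjugacy. Note that the statement only asserts the \emph{existence} of a matrix in $\GL_d(\Z)$ relating $H_1$ and $H_2$, and since $\GL_d(\Z)$ is closed under transpose and inverse the passage between $A$ and $B=(A^{\mathsf T})^{-1}$ is harmless. Once the transformation law $\tau_{\mu_1}^1\circ(h,\alpha)^*=A^{\mathsf T}\circ\tau_{\mu_2}^1$ is in place, Proposition~\ref{P:Tau} converts the abstract isomorphism of actions into the concrete lattice identity, completing the proof.
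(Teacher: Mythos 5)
Your proof is correct, but it follows a genuinely different route from the paper's. The paper's proof is a short reduction to the conjugacy case: for both directions it invokes \cite[Proposition~2.8]{GPS19}, which says that twisting $H_1$ by $A\in\GL_d(\Z)$ produces an action $(Y_{AH_1},\psi_{AH_1})$ isomorphic to $(Y_{H_1},\psi_{H_1})$ via the automorphism $A$ of $\Z^d$, and conversely that any isomorphism of such actions yields some $A\in\GL_d(\Z)$ with $(Y_{AH_1},\psi_{AH_1})$ conjugate to $(Y_{H_2},\psi_{H_2})$; Corollary~\ref{C:Conj} then forces $AH_1=H_2$. You instead bypass that citation entirely: in the forward direction you prove a twisted generalization of Lemma~\ref{L:Conj} (the transformation law $\tau^1_{\mu_1}\circ(h,\alpha)^*=A^{\mathsf T}\circ\tau^1_{\mu_2}$, which specializes to Lemma~\ref{L:Conj} when $\alpha=\id$) and feed it directly into Proposition~\ref{P:Tau}, and in the converse direction you reconstruct the content of \cite[Proposition~2.8]{GPS19} by hand through Pontryagin duality, tracking the dual map $\widehat{A_{\T}}=A^{\mathsf T}$ explicitly. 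Both arguments are sound; I checked your measure-pushforward step (which indeed needs surjectivity of $\alpha$), the cocycle pullback, and the duality bookkeeping, and the discrepancy between your matrix $(A^{\mathsf T})^{-1}$ and the automorphism matrix $A$ is harmless, as you note, since the statement is existential and $\GL_d(\Z)$ is closed under transpose and inverse. What the paper's route buys is brevity and reuse of the machinery already set up for Corollary~\ref{C:Conj}; what your route buys is self-containedness — the only external input is Proposition~\ref{P:Tau} itself — plus an explicit formula for how the twisting matrix arises, at the cost of redoing the duality computation that the cited proposition encapsulates.
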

\begin{proof}
If  $d=d_1=d_2$ and $AH_1=H_2$ for some $A\in {\rm GL}_d(\Z)$, then $(Y_{A H_1}, \psi_{A H_1})$ is trivially conjugate to $(Y_{H_2}, \psi_{H_2})$. Moreover, the actions $(Y_{A H_1}, \psi_{A H_1})$ and $(Y_{H_1}, \psi_{H_1})$ are isomorphic via the automorphism of $\Z^d$ defined by $A$ (see~\cite[Proposition~2.8]{GPS19}).

\sbr

Conversely, assume $(Y_{H_1}, \psi_{H_1})$ and  $(Y_{H_2}, \psi_{H_2})$ are isomorphic. The existence of a group isomorphism $\alpha\: H_1\to H_2$ implies $d_1=d_2$. Indeed, $d_i$ is the rank of $H_i,\ i=1,2$, and a group isomorphism preserves the rank. 
We denote the common value of $d_1, d_2$ by $d$. By 
\cite[Proposition~2.8]{GPS19}, there is $A\in {\rm GL}_d(\Z)$ such that $(Y_{A H_1}, \psi_{A H_1})$ is conjugate to $(Y_{H_2}, \psi_{H_2})$. We now apply Corollary~\ref{C:Conj} to conclude $A H_1=H_2$. 
\end{proof}

\section{Continuous Orbit Equivalence}
\noindent
\begin{definition}
An action $(X,\phi)$ of a group $G$ and an action $(Y,\psi)$ of another group $G'$ are said to be \emph{orbit equivalent} if there exists a homeomorphism $h\: X\to Y$ such that for each $x\in X$ one has
\bbe\label{eq:h}
h\left(\{\phi^g(x)\,\,\vert\,\, g\in G\}\right)=\{\psi^{g'}(h(x))\,\,\vert\,\, g'\in G'\}.
\ee
\end{definition}

In~\cite{GPS19}, the authors characterize orbit equivalence of $\Z^d$-action $(Y_H, \psi_H)$, where $H$ is a dense subgroup of $\Q^d$ containing $\Z^d$, using superindex $[[H\:\Z^d]]$. The \emph{superindex} is defined as 
$$
[[H\:\Z^d]]=\left\{[H'\: \Z^d]\,\,\vert\,\,\Z^d\subseteq H'\subseteq H,\ [H'\:\Z^d]<\infty\right\}.
$$

\begin{theorem}\cite[Corollary~5.5]{GPS19}\label{T:OrbEq}
Let $d,d'\in\N$, let $H$ be a dense subgroup of $\Q^d$ that contains $\Z^d$, and let $H'$ be a dense subgroup of $\Q^{d'}$ that contains $\Z^{d'}$. The $\Z^d$-action $(Y_H, \psi_H)$ and the $\Z^{d'}$-action $(Y_{H'}, \psi_{H'})$ are orbit equivalent if and only if 
$$[[H\:\Z^d]]=[[H'\:\Z^{d'}]].$$ 
\end{theorem}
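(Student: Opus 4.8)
The plan is to prove the theorem in two stages: a dynamical reduction of topological orbit equivalence to a single ordered-group invariant, followed by an elementary identification of that invariant with the superindex. Throughout I would replace $(Y_H,\psi_H)$ by the conjugate odometer $(X_\G,\phi_\G)$ with $\G=\{G_n\}$ and $G_n=H_n^*$, exactly as in the proof of Proposition~\ref{P:Tau}; by duality of lattices $[\Z^d\:G_n]=[H_n\:\Z^d]$, so the unique invariant measure $\mu_\G$ takes the value $[\Z^d\:G_n]^{-1}$ on each basic cylinder $\pi_n^{-1}(k+G_n)$, and the subgroup of $\R$ generated by the measures of clopen sets is
$$
M_H=\big\langle\, [\Z^d\:G_n]^{-1} \,\,\vert\,\, n\in\N \,\big\rangle \subseteq \Q .
$$
The dynamical input I would record first is that each $(X_\G,\phi_\G)$ is a free, minimal, uniquely ergodic $\Z^d$-action on a Cantor set whose orbit equivalence relation is orbit equivalent to a minimal \'etale AF (hyperfinite) equivalence relation; for $d=1$ this is the classical odometer picture, while for $d>1$ it is the affability of free minimal $\Z^d$-Cantor systems. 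Granting this, a topological orbit equivalence of two such systems is an isomorphism of the associated AF relations, and by the Giordano--Putnam--Skau classification such an isomorphism is detected by the dimension group together with its order structure; for orbit equivalence (as opposed to the stronger notion of continuous orbit equivalence) the complete invariant is the dimension group modulo its subgroup of infinitesimals, together with the class of the order unit. Crucially, this statement never refers to the acting group, which is exactly why orbit equivalence is insensitive to the dimension and may hold between a $\Z^d$- and a $\Z^{d'}$-action with $d\neq d'$.

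Next, since $(X_\G,\phi_\G)$ is uniquely ergodic, integration against $\mu_\G$ identifies the dimension group modulo infinitesimals, as an ordered group with order unit, with the pair $(M_H,1)$, where $M_H\subseteq\Q$ carries the order inherited from $\R$ and $1$ is the image of the constant function. Thus the two systems are orbit equivalent if and only if $(M_H,1)$ and $(M_{H'},1)$ are isomorphic as ordered groups with order unit. Any such isomorphism $\varphi$ fixes $1$ and preserves the ambient order of $\Q$; writing an element of $M_H$ as $m/n$ and using $n\cdot\varphi(m/n)=\varphi(m)=m$ forces $\varphi$ to be the identity, so the isomorphism of invariants is equivalent to the plain equality $M_H=M_{H'}$ of subgroups of $\Q$.

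Finally I would establish the purely algebraic equivalence $M_H=M_{H'}\iff [[H\:\Z^d]]=[[H'\:\Z^{d'}]]$. Intermediate subgroups $\Z^d\subseteq H'\subseteq H$ of finite index correspond bijectively to finite subgroups of the torsion abelian group $H/\Z^d$, with $[H'\:\Z^d]=|H'/\Z^d|$. Under the primary decomposition $H/\Z^d=\bigoplus_p T_p$ a finite subgroup splits as a direct sum of finite subgroups of the $T_p$, and since every finite abelian $p$-group has subgroups of each order dividing its own, the orders of finite subgroups of $T_p$ form an initial segment $\{1,p,\dots,p^{e_p}\}$ for a well-defined $e_p\in\{0,1,\dots,\infty\}$. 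Hence $[[H\:\Z^d]]$ is exactly the set of divisors of the supernatural number $\prod_p p^{e_p}$, and is therefore determined by, and determines, the family $(e_p)_p$. The same exponents govern the denominators occurring in $M_H$, the integers $[\Z^d\:G_n]$ being cofinal among all finite intermediate indices, so $M_H$ is likewise equivalent data to $(e_p)_p$. Comparing the two systems, $M_H=M_{H'}$ and $[[H\:\Z^d]]=[[H'\:\Z^{d'}]]$ each amount to the equality of the two families of exponents, which proves the equivalence and hence the theorem. I expect the only substantial difficulty to lie in the dynamical reduction above, namely identifying the orbit relation of a free minimal $\Z^d$-Cantor action with a classifiable AF relation: for $d=1$ this is routine, but for $d>1$ it rests on the affability and absorption results for $\Z^d$-Cantor systems, which is precisely the deep input behind \cite[Corollary~5.5]{GPS19}. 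By contrast, the unique-ergodicity computation of the invariant and the abelian-group bookkeeping with the superindex are both elementary.
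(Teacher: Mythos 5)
Your proposal is essentially correct, but note first that this paper contains no proof of the statement at all: the theorem is imported verbatim as \cite[Corollary~5.5]{GPS19}, so the only honest comparison is with the proof in that reference. Your sketch reconstructs exactly the route taken there: the deep input is the affability of free minimal $\Z^d$-Cantor systems (Giordano--Matui--Putnam--Skau) together with the classification of minimal AF relations up to orbit equivalence, and the rest is a computation for odometers. Your elementary steps are all sound: by duality $[\Z^d\:G_n]=[H_n\:\Z^d]$, unique ergodicity identifies the invariant with the pair $(M_H,1)$, a unit-preserving group isomorphism between subgroups of $\Q$ is forced to be the identity (so isomorphism of invariants collapses to equality $M_H=M_{H'}$), and both $M_H$ and $[[H\:\Z^d]]$ encode the same supernatural number $\prod_p p^{e_p}$, using that the indices $[H_n\:\Z^d]$ are cofinal under divisibility among all finite intermediate indices and that the set of orders of finite subgroups of an abelian torsion group is closed under taking divisors. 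The one difference is packaging: \cite{GPS19} states the dynamical input in terms of the existence of a measure-preserving homeomorphism (equivalently, equality of clopen value sets of the unique invariant measures), whereas you route through dimension groups modulo infinitesimals with order unit; these are equivalent formulations of the same classification, so nothing is gained or lost. In short, your proof is a faithful reconstruction of the cited argument, with the deep dynamical theorem black-boxed exactly where the paper black-boxes it by citation.
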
 

Let $(X,\phi)$ be an action of a group $G$ and let $(Y,\psi)$ be an action of a group $G'$. Assume $(X,\phi)$ and $(Y,\psi)$ are orbit equivalent. By definition, there exists a homeomorphism $h:X\rightarrow Y$ satisfying \eqref{eq:h}. If the actions 
$(X,\phi)$, $(Y,\psi)$ are free, then there exist unique maps $\alpha\: X\times G\to G'$ and $\beta\: Y\times G'\to G$ such that
$$
h(\phi^g(x))=\psi^{\alpha(x,g)}(h(x)),
$$
for all $x\in X, g\in G$, and also
$$
h^{-1}(\psi^{g'}(y)=\phi^{\beta(y,g')}(h^{-1}(y)),
$$
for all $y\in Y, g'\in G'$. The maps $\alpha$, $\beta$ are called  \emph{orbit cocycles}.

\begin{definition}
Let $(X,\phi)$ and $(Y,\psi)$ be free actions of groups $G$ and $G'$, respectively, such that $(X,\phi)$ and $(Y,\psi)$ are orbit equivalent. Then the actions $(X,\phi)$ and $(Y,\psi)$ are called \emph{continuously orbit equivalent} if there is a homeomorphism $h\: X\to Y$ such that the associated orbit cocycles $\alpha$ and $\beta$ are continuous in the corresponding product topologies.
\end{definition}

\begin{proposition}\label{P:ContOrbEqv} 
Let $d_1,d_2\in\N$, let $(Y_{H_1}, \psi_{H_1})$ be a free $\Z^{d_1}$-action, and let $(Y_{H_2},\psi_{H_2})$ be a free $\Z^{d_2}$-action. Then $(Y_{H_1}, \psi_{H_1})$ and $(Y_{H_2},\psi_{H_2})$ are continuously orbit equivalent if and only if $d=d_1=d_2$ and there exists $A\in {\rm GL}_d(\Q)$ with $\det A=\pm 1$ and $AH_1=H_2$.
\end{proposition}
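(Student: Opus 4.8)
The plan is to follow the scheme of~\cite{GPS19} for the $d\le 2$ case, replacing every appeal to~\cite[Theorem~4.4]{GPS19} by Proposition~\ref{P:Tau}; this substitution is exactly what lets the argument run in arbitrary dimension. I would prove the two implications separately, and in both directions pass to the dual--lattice (inverse limit) description $(Y_{H_i},\psi_{H_i})\cong(X_{\mathcal G_i},\phi_{\mathcal G_i})$, $G_n^{(i)}=(H_n^{(i)})^*$, used in the proof of Proposition~\ref{P:Tau}.

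For the forward direction, suppose $h\:Y_{H_1}\to Y_{H_2}$ is a continuous orbit equivalence with continuous, hence locally constant, orbit cocycles $\alpha\:Y_{H_1}\times\Z^{d_1}\to\Z^{d_2}$ and $\beta\:Y_{H_2}\times\Z^{d_2}\to\Z^{d_1}$. First I would average: the cocycle identity for $\alpha$ together with the $\psi_{H_1}$-invariance of $\mu_1$ shows that $n\mapsto\int_{Y_{H_1}}\alpha(x,n)\,d\mu_1(x)$ is additive, hence extends to a linear map $A\:\R^{d_1}\to\R^{d_2}$; symmetrically $\beta$ gives $B\:\R^{d_2}\to\R^{d_1}$. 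Using the defining relations between $\alpha$, $\beta$ and $h_*\mu_1=\mu_2$ one shows that $AB$ and $BA$ are the identity maps, so $d_1=d_2=:d$ and $A\in\GL_d(\R)$. Next, the pair $(h,\alpha)$ induces a pullback on cohomology by $h^*\theta(x,n)=\theta(h(x),\alpha(x,n))$; as in Lemma~\ref{L:Conj} one checks this carries cocycles to cocycles and coboundaries to coboundaries, with the $\beta$-pullback as an inverse, so $h^*\:H^1(Y_{H_2},\psi_{H_2})\to H^1(Y_{H_1},\psi_{H_1})$ is an isomorphism. The crucial point is that, after identifying the $\mu$-cohomology groups with $H_2$ and $H_1$ through Proposition~\ref{P:Tau}, the map $h^*$ is intertwined with the linear map $A$ (up to transpose/inverse under the inner product identification), which forces $AH_1=H_2$; since $H_1$ spans $\Q^d$ and $AH_1\subseteq\Q^d$, the matrix $A$ is rational, i.e. $A\in\GL_d(\Q)$. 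Finally, $\det A=\pm1$ follows from $h_*\mu_1=\mu_2$: preservation of the invariant probability measures pins the Jacobian of the averaged cocycle to $1$ (equivalently, it forces the level-$n$ index growth of $\mathcal G_1$ and $\mathcal G_2$ to agree, consistently with the superindex equality of Theorem~\ref{T:OrbEq}).

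For the reverse direction, suppose $d_1=d_2=d$, $A\in\GL_d(\Q)$, $\det A=\pm1$, and $AH_1=H_2$. Writing $H_i=\bigcup_n H_n^{(i)}$ and passing to dual lattices, the relation $AH_1=H_2$ translates into a correspondence of the two lattice towers governed by $C=(A^{t})^{-1}\in\GL_d(\Q)$ with $\det C=\pm1$. I would then build the orbit equivalence $h\:X_{\mathcal G_1}\to X_{\mathcal G_2}$ level by level in the inverse limit, using $C$ to match the finite quotients; the one subtlety is that $A$ (equivalently $C$) need not preserve $\Z^d$, so each level must be enlarged by $\Z^d$ and matched cofinally. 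Because $C$ is a fixed matrix, the resulting orbit cocycles stabilize on cylinders and are therefore locally constant, hence continuous, and $\det A=\pm1$ guarantees that $h$ preserves the invariant probability measures. In fact $\det A=\pm1$ together with $AH_1=H_2$ should yield $[[H_1\:\Z^{d}]]=[[H_2\:\Z^{d}]]$, so that orbit equivalence is already provided by Theorem~\ref{T:OrbEq} and only the continuity of the cocycles needs to be arranged.

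The main obstacle is the rigidity step in the forward direction: upgrading the abstract group isomorphism $H_1\cong H_2$ coming from $h^*$ to a \emph{linear} isomorphism realized by a rational matrix of determinant $\pm1$. Orbit equivalence alone (Theorem~\ref{T:OrbEq}) detects only the superindex and is blind to this linear structure; it is precisely the continuity of $\alpha$, through its averaged matrix $A$ and its compatibility with $\tau_\mu^1$, that supplies it. This is exactly where Proposition~\ref{P:Tau} is indispensable, since by identifying the image of $\tau_\mu^1$ with $H$ in every dimension it makes the linearity computation---available only for $d\le 2$ in~\cite{GPS19}---go through for all $d$.
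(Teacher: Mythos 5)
Your proposal is correct and takes essentially the same approach as the paper: the paper's proof simply runs the argument of \cite[Theorem~5.7]{GPS19} with Proposition~\ref{P:Iso} substituted for \cite[Corollary~5.1]{GPS19}, and your plan of substituting Proposition~\ref{P:Tau} for \cite[Theorem~4.4]{GPS19} is the same replacement made one level deeper, since Proposition~\ref{P:Iso} is itself deduced from Proposition~\ref{P:Tau}. Your expanded sketch (cocycle averaging, cohomological pullback intertwined with the averaged matrix, the determinant normalization, and the level-by-level converse construction) is a reconstruction of what that cited proof does, not a departure from it.
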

\begin{proof}
Here ${\rm GL}_d(\Q)$ denotes the group of non-singular $d\times d$-matrices with rational coefficients. 
The proof follows the lines of the proof of~\cite[Theorem~5.7]{GPS19}, where one uses Proposition~\ref{P:Iso} in place of~\cite[Corollary~5.1]{GPS19}.
\end{proof}

\begin{remark}
Investigation of continuous orbit equivalence in the general setting was carried out in \cite{li}. In particular, conditions on when continuous orbit equivalence implies isomorphic equivalence were given in that paper.
\end{remark}

\section{Odometers defined by matrices}
\noindent
In this section we generalize the results in \cite{s} on $\Z^2$-odometers defined by matrices with integer coefficients to the $d$-dimensional case, $d>2$. 
For convenience, we first put together the results from previous sections in one theorem. To simplify notation, 
we denote a $\Z^d$-action $(Y_H,\psi_H)$ defined above for a subgroup $H$ of $\Q^d$ containing $\Z^d$ 
by $Y_H$. 

\begin{theorem}\label{th:chug}
Let $d\in\N$ and let $H_1,H_2$ be dense 
subgroups of $\Q^d$ such that 
$\Z^d\subseteq H_1$, $\Z^d\subseteq H_2$. Then
\begin{enumerate}[$(1)$]
\item $\Z^d$-actions $Y_{H_1}$, $Y_{H_2}$ are conjugate if and only if $H_1=H_2$.
\item $\Z^d$-actions $Y_{H_1}$, $Y_{H_2}$ are isomorphic if and only if there is $T\in\GL_d(\Z)$ such that $TH_1=H_2$.
\item $\Z^d$-actions $Y_{H_1}$, $Y_{H_2}$ are continuously orbit equivalent if and only if there is $T\in\GL_d(\Q)$ such that $\det T=\pm 1$ and $TH_1=H_2$.
\item Assume $H_1$, $H_2$ are dense in $\Q^d$. Then $\Z^d$-actions $Y_{H_1}$, $Y_{H_2}$ are orbit equivalent if and only if 
$[[H_1:\Z^d]]=[[ H_2:\Z^d]]$.
\end{enumerate}
\end{theorem}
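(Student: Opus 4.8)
The plan is to deduce each of the four clauses directly from a result already proved in the paper, after verifying that the standing hypotheses of Theorem~\ref{th:chug} supply exactly the conditions those earlier results require. Since $H_1$ and $H_2$ are both dense subgroups of $\Q^d$ containing $\Z^d$ for the \emph{same} $d$, the dimension-matching and density conditions built into the preceding propositions hold automatically, so the only work is to assemble the pieces and check one freeness condition.

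For part $(1)$ I would invoke Corollary~\ref{C:Conj} verbatim: its hypotheses ($H_1,H_2$ dense in $\Q^d$, each containing $\Z^d$) coincide with ours, and it yields conjugacy if and only if $H_1=H_2$. For part $(2)$ I would apply Proposition~\ref{P:Iso} in the case $d_1=d_2=d$; the existence of an isomorphism already forces equal ranks, so the residual content is precisely the existence of $T\in\GL_d(\Z)$ with $TH_1=H_2$. For part $(3)$ — the one clause where a hypothesis genuinely needs checking — I would first recall from the introduction that $(Y_H,\psi_H)$ is free if and only if $H$ is dense in $\Q^d$; density of $H_1$ and $H_2$ therefore makes both actions free, and Proposition~\ref{P:ContOrbEqv} then applies with $d_1=d_2=d$, giving continuous orbit equivalence if and only if there is $T\in\GL_d(\Q)$ with $\det T=\pm1$ and $TH_1=H_2$. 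For part $(4)$ I would cite Theorem~\ref{T:OrbEq} with $d=d'$, which characterizes orbit equivalence by the equality of superindices $[[H_1:\Z^d]]=[[H_2:\Z^d]]$.

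Because each clause reduces to a previously established statement, I do not anticipate any real obstacle: the argument is a short bookkeeping exercise in matching hypotheses to conclusions. The only point worth flagging explicitly is the freeness requirement entering part $(3)$, which is exactly why the density assumption is retained in the statement; everything else is immediate from Corollary~\ref{C:Conj}, Proposition~\ref{P:Iso}, Proposition~\ref{P:ContOrbEqv}, and Theorem~\ref{T:OrbEq}.
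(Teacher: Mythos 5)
Your proposal is correct and matches the paper's own proof, which simply cites Corollary~\ref{C:Conj}, Proposition~\ref{P:Iso}, Proposition~\ref{P:ContOrbEqv}, and Theorem~\ref{T:OrbEq} for the four clauses. Your explicit check that density implies freeness for part $(3)$ is a nice touch the paper leaves implicit, but the route is identical.
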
 
\begin{proof}
This is the content of Corollary \ref{C:Conj}, Proposition \ref{P:Iso}, Theorem \ref{T:OrbEq}, and Proposition \ref{P:ContOrbEqv} above.
\end{proof}

Recall that for a non-singular $d\times d$-matrix $A$ with integer coefficients, $A\in\operatorname{M}_d(\Z)$,
$$
G_A=\left.\left\{A^{-k}{ x}\,\right\vert\, { x}\in\Z^d,\,k\in\Z \right\},\quad
\Z^d\subseteq G_A\subseteq \Q^d.
$$
One can check that $G_A$ is a subgroup of $\Q^d$. 
For simplicity, we denote $Y_{G_A}$ by $Y_{A}$. Note that $G_A$ is naturally isomorphic 
to the inductive limit of the system $(\Z^d,f_j)_{j\in\N}$, where each $f_j:\Z^d\rightarrow \Z^d$ is given by multiplication by $A$, 
$f_j({\bf x})=A{\bf x}$, ${\bf x}\in\Z^d$, $j\in\N$.

\sbr

We start with a characterization of dense groups $G_A$ in $\Q^d$. 
Let $h_A\in\Z[t]$ be the characteristic polynomial of $A$ and 
let $h_A=h_1h_2\cdots h_s$,
where $h_1,h_2,\ldots,h_s\in\Z[t]$ are non-constant and irreducible.

\begin{lemma}\cite[Lemma 8.1]{s1}\label{l:dense}
The group $G_A$ is dense in $\Q^d$ if and only if $h_i(0)\ne\pm 1$ for all $i\in\{1,2,\ldots,s\}$.
\end{lemma}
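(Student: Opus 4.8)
The plan is to reduce the statement, via Pontryagin duality, to a purely lattice-theoretic condition on $B:=A^{T}$, and then to read that condition off from the factorization of the characteristic polynomial. First I would record that, since $\Q^d$ is dense in $\R^d$, ``$G_A$ is dense in $\Q^d$'' is equivalent to ``$G_A$ is dense in $\R^d$'', which is exactly the freeness criterion for $(Y_{G_A},\psi_{G_A})$ recalled in the introduction. Projecting to $\T^d=\R^d/\Z^d$, the image of $G_A$ is dense iff its annihilator in $\widehat{\T^d}=\Z^d$ is trivial. Since $G_A=\bigcup_{k\ge 0}A^{-k}\Z^d$ (the sets $A^{-k}\Z^d$ are nested because $A\Z^d\subseteq\Z^d$) and $\langle n,A^{-k}x\rangle=\langle (A^{T})^{-k}n,x\rangle$, a short computation identifies this annihilator with
\[
L:=\bigcap_{k\ge 0}B^{k}\Z^d,\qquad B=A^{T}.
\]
Thus the problem becomes: $L=\{0\}$ iff $h_i(0)\ne\pm1$ for every $i$ (note $h_{B}=h_{A}$, so the factorization is unchanged).

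Next I would analyze $L$ as a $B$-module. A direct check shows $B(L)=L$: the inclusion $B(L)\subseteq L$ is immediate, and if $v\in L\subseteq B\Z^d$ then $B^{-1}v\in\Z^d$ lies in every $B^{k}\Z^d$, so $B^{-1}v\in L$ and $v\in B(L)$. Hence $B$ restricts to an automorphism of the free abelian group $L$; writing $r=\operatorname{rank}L$ and $U=L\otimes\Q$, the matrix of $B|_{L}$ in a $\Z$-basis lies in $\GL_{r}(\Z)$, so its characteristic polynomial $g\in\Z[t]$ is monic with $g(0)=\pm\det(B|_{U})=\pm1$. Because $U$ is a $B$-invariant $\Q$-subspace, $g$ divides $h_{B}$ in $\Q[t]$, so $g=\prod_{i\in S}h_i$ for some subcollection $S\subseteq\{1,\dots,s\}$, and $g(0)=\prod_{i\in S}h_i(0)=\pm1$ forces $h_i(0)=\pm1$ for each $i\in S$. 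If $L\ne\{0\}$ then $S\ne\emptyset$, which proves the ``only if'' direction in contrapositive form.

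For the converse I would build a nonzero element of $L$ out of any unit factor. Suppose $h_{i_0}(0)=\pm1$, and let $U_{i_0}=\ker h_{i_0}(B)^{m}\subseteq\Q^d$ be the corresponding primary component ($m$ the multiplicity of $h_{i_0}$ in $h_{B}$), a nonzero $B$-invariant rational subspace on which $B$ has characteristic polynomial $h_{i_0}^{m}$ and hence determinant $\pm h_{i_0}(0)^{m}=\pm1$. Then $L_{i_0}:=U_{i_0}\cap\Z^d$ is a full lattice in $U_{i_0}$ with $B(L_{i_0})\subseteq L_{i_0}$ and $\det(B|_{L_{i_0}})=\pm1$; as before this makes $B|_{L_{i_0}}$ an automorphism, so $B^{k}L_{i_0}=L_{i_0}\subseteq B^{k}\Z^d$ for all $k\ge0$, giving $0\ne L_{i_0}\subseteq L$. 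Hence $G_A$ is not dense, completing the equivalence.

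I expect the main obstacle to be the duality reduction of the first paragraph, namely pinning down the correct notion of ``dense'' and recognizing the annihilator as $\bigcap_{k\ge 0}B^{k}\Z^d$; once this reformulation is in hand, the remaining work is standard integral linear algebra, the only recurring point being the fact that an integer matrix preserving a lattice with determinant $\pm1$ acts invertibly over $\Z$.
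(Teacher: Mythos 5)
Your argument is correct, but note that there is nothing in this paper to compare it against: the lemma is stated here as a citation of \cite[Lemma 8.1]{s1} and no proof is reproduced in the present text, so your proof can only be judged on its own terms. On those terms it holds up. The duality reduction is right: since $\Z^d\subseteq G_A$ and $\Q^d$ is dense in $\R^d$, density of $G_A$ in $\Q^d$ is equivalent to density of its image in $\T^d$, which by Pontryagin duality is equivalent to triviality of the annihilator, and your computation $\langle n,A^{-k}x\rangle=\langle (A^T)^{-k}n,x\rangle$ correctly identifies that annihilator with $L=\bigcap_{k\ge0}B^k\Z^d$, $B=A^T$. The two lattice-theoretic halves are also sound: $B(L)=L$ forces the characteristic polynomial $g$ of $B|_L$ to be a monic integer divisor of $h_B=h_A$ with $g(0)=\pm1$, so $L\neq\{0\}$ produces a factor $h_i$ with $h_i(0)=\pm1$; conversely, a factor with $h_{i_0}(0)=\pm1$ yields the primary component $U_{i_0}$ on which $\det(B|_{U_{i_0}})=\pm1$, and then $L_{i_0}=U_{i_0}\cap\Z^d$ is a nonzero $B$-stable lattice on which $B$ acts with determinant $\pm1$, hence invertibly over $\Z$, giving $L_{i_0}\subseteq L$. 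One cosmetic slip: what you call the ``only if'' direction (all $h_i(0)\neq\pm1$ implies dense) is actually the ``if'' direction of the lemma as stated, and vice versa; since you prove both implications this is harmless, but the labels should be swapped.
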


Next, we describe orbit equivalent odometers.

\begin{lemma}\cite[Lemma 8.2]{s1}\label{l:orbit}
Let $A,B\in \operatorname{M}_d(\Z)$ be non-singular such that 
$G_A$ $($resp., $G_B$$)$ is dense in $\Q^d$.
Then $\Z^d$-actions $Y_A$, $Y_B$ are orbit equivalent if and only if $\det A,\det B$ have the same prime divisors $($in $\Z)$.
\end{lemma}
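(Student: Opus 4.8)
The plan is to reduce orbit equivalence to the superindex criterion of Theorem~\ref{th:chug}(4) and then compute $[[G_A:\Z^d]]$ explicitly in terms of $\det A$. Since $G_A$ and $G_B$ are assumed dense in $\Q^d$, Theorem~\ref{th:chug}(4) applies and tells us that $Y_A$ and $Y_B$ are orbit equivalent if and only if $[[G_A:\Z^d]]=[[G_B:\Z^d]]$, so the whole statement reduces to identifying this set. First I would record the natural filtration of $G_A$: because $A\in\M_d(\Z)$ we have $A\Z^d\subseteq\Z^d$, hence $\Z^d\subseteq A^{-1}\Z^d$, and applying $A^{-1}$ repeatedly yields an increasing chain $\Z^d\subseteq A^{-1}\Z^d\subseteq A^{-2}\Z^d\subseteq\cdots$ whose union is exactly $G_A$. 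Multiplication by $A^k$ gives an isomorphism $A^{-k}\Z^d/\Z^d\cong\Z^d/A^k\Z^d$, so that $[A^{-k}\Z^d:\Z^d]=\lvert\det A\rvert^{k}$. Write $n=\lvert\det A\rvert$.

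Next I would compute the superindex. For the easy inclusion, suppose $\Z^d\subseteq H'\subseteq G_A$ has finite index. Then $H'/\Z^d$ is a finite subgroup of the increasing union $G_A/\Z^d=\bigcup_k A^{-k}\Z^d/\Z^d$, so $H'\subseteq A^{-k}\Z^d$ for some $k$; consequently $[H':\Z^d]$ divides $[A^{-k}\Z^d:\Z^d]=n^k$, and in particular every prime factor of $[H':\Z^d]$ divides $n$. For the reverse inclusion, the intermediate groups between $\Z^d$ and $A^{-k}\Z^d$ correspond to subgroups of the finite abelian group $A^{-k}\Z^d/\Z^d$ of order $n^k$, and a finite abelian group possesses a subgroup of every order dividing its order; hence every divisor of $n^k$ is realized as some $[H':\Z^d]$ with $\Z^d\subseteq H'\subseteq A^{-k}\Z^d\subseteq G_A$. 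Taking the union over all $k$ gives
$$
[[G_A:\Z^d]]=\{\,m\in\N \mid \text{every prime divisor of } m \text{ divides } \det A\,\}.
$$

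Finally, this set is completely determined by, and in turn determines, the set of prime divisors of $\det A$, since those primes are precisely the prime numbers belonging to the set. Therefore $[[G_A:\Z^d]]=[[G_B:\Z^d]]$ if and only if $\det A$ and $\det B$ have the same prime divisors, which combined with Theorem~\ref{th:chug}(4) yields the lemma. The only step carrying genuine content is the computation of the superindex, and the crux there is the elementary structural fact that every divisor of the order of a finite abelian group occurs as the order of a subgroup; the density hypothesis on $G_A$ and $G_B$ is used only to legitimize the appeal to Theorem~\ref{th:chug}(4).
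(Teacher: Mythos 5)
Your proof is correct and complete. Note that the paper itself does not prove this lemma---it cites \cite[Lemma 8.2]{s1}---and your argument is exactly the natural route that citation encapsulates: reduce to the superindex criterion of Theorem~\ref{th:chug}(4) and compute $[[G_A:\Z^d]]$ from the filtration $\Z^d\subseteq A^{-1}\Z^d\subseteq A^{-2}\Z^d\subseteq\cdots$, using $[A^{-k}\Z^d:\Z^d]=\lvert\det A\rvert^k$ and the fact that a finite abelian group has a subgroup of every order dividing its order. The only step left implicit is the (trivial) observation that any $m$ all of whose prime factors divide $\lvert\det A\rvert$ divides $\lvert\det A\rvert^k$ for $k$ large, which is what lets you pass from ``divisors of $n^k$ for some $k$'' to the stated description of the superindex.
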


The next lemma is a special (simple) case when all the equivalences hold at the same time. 

\begin{lemma}\label{lem:ttodo}
Let $A,B\in\operatorname{M}_d(\Z)$ be non-singular such that $G_A$ $($resp., $G_B$$)$ is dense in 
$\Q^d$. Assume that for any prime $p\in\N$ that divides $\det A$ we have 
$$
h_A\equiv t^d\,(\text{mod }p).
$$
Then the following are equivalent:
\begin{enumerate}[$(1)$]
\item $\Z^d$-actions $Y_A$, $Y_B$ are conjugate;
\item $\Z^d$-actions $Y_A$, $Y_B$ are isomorphic;
\item $\Z^d$-actions $Y_A$, $Y_B$ are continuously orbit equivalent;
\item $\det A$, $\det B$ have the same prime divisors and for any prime $p\in\N$ that divides $\det B$ we have
$$
h_B\equiv t^d\,(\text{mod }p).
$$
\end{enumerate}
\end{lemma}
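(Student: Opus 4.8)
The plan is to reduce all four conditions to a single rigid structural statement: under the standing hypothesis, $G_A$ is as large as possible at each prime dividing $\det A$. Concretely, I would first isolate the following fact, which I expect to be the technical heart of the argument. For non-singular $C\in\operatorname{M}_d(\Z)$, writing $N_C=\prod_{p\mid\det C}p$ for the radical of $\det C$,
\[
G_C=\Z[1/N_C]^d \iff h_C\equiv t^d\ (\mathrm{mod}\ p)\ \text{for every prime } p\mid\det C .
\]
The inclusion $G_C\subseteq\Z[1/N_C]^d$ holds unconditionally, since $C^{-k}=(\det C)^{-k}\operatorname{adj}(C)^k$ has entries in $\Z[1/\det C]=\Z[1/N_C]$. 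For the implication $(\Leftarrow)$ I would use that $h_C\equiv t^d\ (\mathrm{mod}\ p)$ means $\bar C:=C\bmod p$ is nilpotent, so Cayley--Hamilton over $\mathbb{F}_p$ gives $C^d=pD$ with $D\in\operatorname{M}_d(\Z)$; then $C^{dm}=p^mD^m$, and $C^{-dm}(D^mv)=v/p^m$ shows every $v/p^m$ lies in $G_C$. Assembling over all $p\mid N_C$ yields $\Z[1/N_C]^d\subseteq G_C$.

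For the converse $(\Rightarrow)$ I would argue by contraposition at a single prime $p\mid\det C$ with $h_C\not\equiv t^d\ (\mathrm{mod}\ p)$: then $\bar C$ is not nilpotent, so its eventual image $W=\bigcap_{k\ge0}\bar C^{k}(\mathbb{F}_p^d)$ is a nonzero subspace on which $\bar C$ acts bijectively. Choosing a primitive lift $w\in\Z^d$ of a nonzero $\bar w\in W$, the equivalence $w/p\in G_C\iff \bar C^{k}\bar w=0$ in $\mathbb{F}_p^d$ for some $k$ exhibits $w/p\in\Z[1/N_C]^d\setminus G_C$, so $G_C\ne\Z[1/N_C]^d$. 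I expect this converse --- ruling out hidden $p$-power denominators created by a non-nilpotent reduction --- to be the main obstacle, and the stable-image trick is what keeps it clean.

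With the key fact in hand, I would close the cycle $(4)\Rightarrow(1)\Rightarrow(2)\Rightarrow(3)\Rightarrow(4)$ via Theorem~\ref{th:chug}. For $(4)\Rightarrow(1)$: the standing hypothesis gives $G_A=\Z[1/N_A]^d$ and condition $(4)$ gives $G_B=\Z[1/N_B]^d$; since $\det A,\det B$ share their prime divisors, $N_A=N_B$, whence $G_A=G_B$ and $Y_A,Y_B$ are conjugate by Theorem~\ref{th:chug}(1). The implications $(1)\Rightarrow(2)\Rightarrow(3)$ are immediate: conjugacy gives $G_A=G_B$, so $T=I\in\operatorname{GL}_d(\Z)$ witnesses isomorphism, and any $T\in\operatorname{GL}_d(\Z)$ lies in $\operatorname{GL}_d(\Q)$ with $\det T=\pm1$, witnessing continuous orbit equivalence.

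Finally, $(3)\Rightarrow(4)$ combines Lemma~\ref{l:orbit} with the key fact. A continuous orbit equivalence is in particular an orbit equivalence, so Lemma~\ref{l:orbit} gives that $\det A,\det B$ have the same prime divisors --- the first half of $(4)$ --- and I set $N=N_A=N_B$, so the hypothesis on $A$ gives $G_A=\Z[1/N]^d$. By Theorem~\ref{th:chug}(3) there is $T\in\operatorname{GL}_d(\Q)$ with $\det T=\pm1$ and $TG_A=G_B$. The always-valid inclusions $\Z^d\subseteq G_B\subseteq\Z[1/N]^d$ read $\Z^d\subseteq T\,\Z[1/N]^d\subseteq\Z[1/N]^d$, forcing $T\in\operatorname{M}_d(\Z[1/N])$; as $\det T=\pm1$ is a unit in $\Z[1/N]$, in fact $T\in\operatorname{GL}_d(\Z[1/N])$, so $G_B=T\,\Z[1/N]^d=\Z[1/N]^d=\Z[1/N_B]^d$. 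The converse direction of the key fact, applied to $B$, then yields $h_B\equiv t^d\ (\mathrm{mod}\ p)$ for all $p\mid\det B$, completing $(4)$.
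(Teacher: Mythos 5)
Your proof is correct, and it follows the same skeleton as the paper's: reduce all four conditions to the equality $G_A=G_B$ via Theorem~\ref{th:chug}, with the structural input being the identification of $G_C$ with $\Z[1/N_C]^d$ under the congruence hypothesis. The difference is that the paper's proof is purely citational --- it invokes Theorem~\ref{th:chug} together with \cite[Lemma 3.10]{s}, the latter being precisely where the characterization $G_C=\Z[1/N_C]^d \iff h_C\equiv t^d \pmod{p}$ for all $p\mid\det C$ resides --- whereas you reconstruct and prove that characterization from scratch. Your two-sided argument for it is sound: Cayley--Hamilton over $\mathbb{F}_p$ gives $C^d=pD$ with $D$ commuting with $C$, whence $C^{-dm}(D^m v)=v/p^m$, and assembling over the primes dividing $N_C$ (by coprimality of the prime powers) gives $\Z[1/N_C]^d\subseteq G_C$; conversely, the eventual-image (Fitting) argument correctly obstructs $w/p\in G_C$ for $\bar w$ a nonzero vector of the stable image, since $w/p=C^{-k}x$ with $x\in\Z^d$ forces $\bar C^k\bar w=0$ while $\bar C$ is injective on that image. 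Your step $(3)\Rightarrow(4)$ also deserves mention: the observation that $\det T=\pm1$ together with $T\,\Z[1/N]^d\subseteq\Z[1/N]^d$ forces $T\in\GL_d(\Z[1/N])$, hence $G_B=\Z[1/N]^d$, is a genuine rigidity argument that the paper leaves implicit in its citation, and it is exactly what is needed to pass from continuous orbit equivalence back to the congruence condition on $h_B$. In short, you take the same route as the paper but make it self-contained, and every step you supply checks out.
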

\begin{proof}
Follows from Theorem \ref{th:chug} and \cite[Lemma 3.10]{s}.
\end{proof}

Let
$$
\det A=ap_1^{s_1}p_2^{s_2}\cdots p_l^{s_l}
$$
be the prime-power factorization of $\det A$, where $p_1,p_2,\ldots,p_l\in\N$ are distinct primes, $a=\pm 1$, and $s_1,s_2,\ldots,s_l\in\N$. Let 
$$
\cP=\cP(A)=\{p_1,p_2,\ldots,p_l\}.
$$
If $\cP=\emptyset$, equivalently, $\det A=\pm 1$, then $G_A=\Z^d$ and $Y_A$ is trivial. 
Moreover, 
 for a non-singular $B\in\M_d(\Z)$ we know that $TG_A=G_B$ for some
$T\in\GL_d(\Q)$  if and only if $\det B=\pm 1$ and hence 
$G_B=\Z^d$ \cite[Lemma 3.2(i)]{s}.

\sbr
 
In what follows we assume $\cP\ne\emptyset$.
Denote
$$
\cP'=\cP'(A)=\left\{p\in\cP,\,\,h_A\not\equiv t^d\,(\text{mod }p)\right\},
$$
where $h_A\in\Z[t]$ denotes the characteristic polynomial of $A$. The case 
$\cP'=\emptyset$ is settled in Lemma \ref{lem:ttodo}, so in what follows we assume $\cP'\ne\emptyset$. Finally, for a prime $p\in\N$ 
let $t_p=t_p(A)$ denote the multiplicity of zero in the reduction of the characteristic polynomial of $A$ modulo $p$, 
$0\leq t_p\leq d$. Thus, in our notation $t_p\ne d$ if and only if $p\in\cP'$.

\sbr

Even though the results in \cite{s1} apply to isomorphisms between groups $G_A$, $G_B$ for arbitrary non-singular $A,B\in\M_d(\Q)$, to avoid making this paper too technical, we only consider a generic case when the characteristic polynomials of $A, B$ are irreducible. An interested reader could use \cite{s1} together with Theorem \ref{th:chug} to treat other cases. Another additional assumption we make is the condition that there exists $p\in\cP'$ such that the greatest common divisor $(t_p,d)$ of $t_p$ and $d$ is $1$. It seems that this condition provides the right setting for
the generalization of the $2$-dimensional case to higher dimensions. In particular, in the $2$-dimensional case, if $h_A$ is irreducible and $TG_A=G_B$ for some $T\in\GL_d(\Q)$, then $h_B$ is irreducible and $T$ takes an eigenvector of $A$ to an 
eigenvector of $B$ \cite{s}. It turns out these facts remain true under the assumption $(t_p,d)=1$ and not true in general, {\em e.g.}, all $t_p=2$ and $d=4$ \cite{s1}.  

\sbr

Let $\overline{\Q}$ denote a fixed algebraic closure of $\Q$. Recall that $\overline{\Q}$ consists of algebraic numbers, roots of non-zero polynomials in one variable with rational coefficients. The eigenvalues of a matrix $A$ with integer coefficients are algebraic numbers since they are roots of the characteristic polynomial of $A$.
Let $A,B\in\M_d(\Z)$ be non-singular and let $\la_1,\ldots,\la_d\in\overline{\Q}$ 
(resp., $\mu_1,\ldots,\mu_d\in\overline{\Q}$) denote eigenvalues of $A$ (resp., $B$). Assume there exists $T\in\GL_d(\Q)$ that satisfies $TG_A=G_B$. Suppose further that the characteristic polynomial $h_A\in\Z[t]$ of $A$ is irreducible and there exists a prime 
$p\in\cP(A)$ such that $(t_p(A),d)=1$. The following facts follow from \cite[Proposition 5.7]{s1}. Namely, $h_B\in\Z[t]$ is irreducible. Moreover, (up to rearrangement of eigenvalues)
\begin{eqnarray}
&& \Q(\la_1)=\Q(\mu_1)\text{ and }\la_1,\,\mu_1 
\text{ have the same}\label{eq:112233}\\
&& \text{prime divisors in the ring of integers.}\nonumber 
\end{eqnarray}
In particular, the splitting fields of $h_A$, $h_B$ coincide. 
Furthermore, both $A$ and $B$ are diagonalizable (over $\overline{\Q}$) and there exist $M,N\in\GL_d(\overline{\Q})$ such that
\bbe\label{eq:matr}
A=M\left(\begin{matrix}
\la_1 & \cdots & 0 \\
\vdots & \ddots & \vdots \\
0 & \cdots & \la_d 
\end{matrix}
\right)M^{-1},\,\,
B=N\left(\begin{matrix}
\mu_1 & \cdots & 0 \\
\vdots & \ddots & \vdots \\
0 & \cdots & \mu_d
\end{matrix}
\right)N^{-1},
\ee
and $T=NM^{-1}$. 
If $\Z^d$-actions $Y_A$, $Y_B$ are conjugate (or isomorphic, or continuously orbit equivalent), then by Theorem \ref{th:chug}, we have $T=I_d$, the identity matrix (or $T\in\GL_d(\Z)$, or $\det T=\pm 1$, respectively) 
and, hence,  $M=N$ (or $NM^{-1}\in\GL_d(\Z)$, or $\det NM^{-1}=\pm 1$, respectively). This proves the following lemma.

\begin{lemma}\label{l:irredodo}
Let $A,B\in\M_d(\Z)$ be non-singular such that $G_A$ $($resp., $G_B$$)$ is dense in $\Q^d$. 
Assume the characteristic polynomial $h_A\in\Z[t]$ of $A$ is irreducible and $(t_p,d)=1$ for some prime 
$p\in\cP$. 
\begin{enumerate}
\item[$(i)$] If $\Z^d$-actions $Y_A$, $Y_B$ are conjugate, then \eqref{eq:112233} holds, there exist $M,N\in\GL_d(\overline{\Q})$ such that 
\eqref{eq:matr} holds, and $M=N$.
\item[$(ii)$] If $\Z^d$-actions $Y_A$, $Y_B$ are isomorphic, then \eqref{eq:112233} holds, there exist $M,N\in\GL_d(\overline{\Q})$ such that 
\eqref{eq:matr} holds, and $NM^{-1}\in\GL_d(\Z)$.
\item[$(iii)$] If $\Z^d$-actions $Y_A$, $Y_B$ are continuously orbit equivalent, then \eqref{eq:112233} holds, there exist $M,N\in\GL_d(\overline{\Q})$ such that 
\eqref{eq:matr} holds, and $\det NM^{-1}=\pm 1$.
\end{enumerate}
%
\end{lemma}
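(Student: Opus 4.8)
The plan is to reduce each of the three equivalences to a single algebraic statement about a transition matrix $T$, and then to feed that statement into the structural description of $A$ and $B$ obtained just before the lemma from \cite[Proposition~5.7]{s1}. Concretely, in each of the three cases Theorem~\ref{th:chug} translates the given equivalence into the existence of a matrix $T$ with $TG_A=G_B$, where the admissible type of $T$ depends on the equivalence: conjugacy forces $G_A=G_B$ by part $(1)$, so one may take $T=I_d$; isomorphism yields $T\in\GL_d(\Z)$ by part $(2)$; and continuous orbit equivalence yields $T\in\GL_d(\Q)$ with $\det T=\pm 1$ by part $(3)$. In all three cases there is at least a $T\in\GL_d(\Q)$ satisfying $TG_A=G_B$, so the hypotheses of the diagonalization discussion preceding the lemma are met.

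Next I would invoke those already-established facts verbatim. Since $h_A$ is irreducible and $(t_p,d)=1$ for some $p\in\cP$, the relation $TG_A=G_B$ forces \eqref{eq:112233}, the irreducibility of $h_B$, the coincidence of the splitting fields, and the simultaneous diagonalizability of $A$ and $B$; this produces $M,N\in\GL_d(\overline{\Q})$ realizing \eqref{eq:matr} together with the identity $T=NM^{-1}$. With this identity in hand the three conclusions are immediate: from $T=I_d$ one reads off $NM^{-1}=I_d$, hence $M=N$, giving $(i)$; from $T\in\GL_d(\Z)$ one gets $NM^{-1}\in\GL_d(\Z)$, giving $(ii)$; and from $\det T=\pm 1$ one gets $\det NM^{-1}=\pm 1$, giving $(iii)$.

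The only genuine content is contained in \cite[Proposition~5.7]{s1}, which is what converts the coarse hypothesis $(t_p,d)=1$ into diagonalizability and the eigenvalue correspondence \eqref{eq:112233}; once that input is granted, the present lemma is pure bookkeeping. The one point that deserves a sentence of care is the claim that the type of $T$ transfers unchanged to $NM^{-1}$, but this is automatic because $T=NM^{-1}$ is an equality of matrices, so whichever constraint (being $I_d$, integrality, or unit determinant) holds for $T$ holds for $NM^{-1}$ as well. I therefore do not anticipate any further obstacle beyond correctly quoting the structural facts.
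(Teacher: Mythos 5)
Your proposal is correct and follows essentially the same route as the paper: the paper likewise uses Theorem~\ref{th:chug} to extract $T$ of the appropriate type ($T=I_d$, $T\in\GL_d(\Z)$, or $\det T=\pm1$), invokes \cite[Proposition~5.7]{s1} to obtain \eqref{eq:112233}, the diagonalizations \eqref{eq:matr}, and the identity $T=NM^{-1}$, and then reads off the three conclusions exactly as you do.
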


As in the $2$-dimensional case, the conditions in Lemma \ref{l:irredodo} are also sufficient in the cases of conjugacy and isomorphism. 

\begin{lemma}\label{l:conjs}
Let $A,B\in\M_d(\Z)$ be non-singular such that $G_A$ $($resp., $G_B$$)$ is dense in $\Q^d$. 
Assume $h_A\in\Z[t]$ is irreducible and $(t_p,d)=1$ for some prime 
$p\in\cP$.
\begin{enumerate}
\item[$(i)$] If \eqref{eq:112233} holds and there exists $M\in\GL_d(\overline{\Q})$ such that 
\eqref{eq:matr} holds for $N=M$, then $\Z^d$-actions $Y_A$, $Y_B$ are conjugate.
\item[$(ii)$] 
 If \eqref{eq:112233} holds, there exist $M,N\in\GL_d(\overline{\Q})$ such that $NM^{-1}\in\GL_d(\Z)$ and
\eqref{eq:matr} holds, then $\Z^d$-actions $Y_A$, $Y_B$ are isomorphic.
\end{enumerate}
\end{lemma}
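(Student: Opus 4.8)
The plan is to reduce both parts to Theorem~\ref{th:chug} by exhibiting, under the stated hypotheses, the identity $TG_A=G_B$ for the matrix $T=NM^{-1}$. Granting this identity, part~(i) is immediate: the hypothesis $N=M$ gives $T=I_d$, hence $G_A=G_B$, and Theorem~\ref{th:chug}(1) yields conjugacy. For part~(ii) we have $T=NM^{-1}\in\GL_d(\Z)$, so $TG_A=G_B$ together with Theorem~\ref{th:chug}(2) yields isomorphism. Thus the entire content is concentrated in the identity $TG_A=G_B$, which is precisely the converse of the correspondence extracted from~\cite[Proposition~5.7]{s1} (the forward direction of which was used in Lemma~\ref{l:irredodo}).

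To establish this identity I would first normalize by conjugation. In both cases $T$ preserves $\Z^d$ (trivially when $T=I_d$, and because $T\in\GL_d(\Z)$ in part~(ii)), so $TG_A=\bigcup_k TA^{-k}\Z^d=\bigcup_k(TAT^{-1})^{-k}\Z^d=G_{A''}$, where $A''=TAT^{-1}$. A direct computation from~\eqref{eq:matr} gives $A''=N\,\operatorname{diag}(\la_1,\dots,\la_d)\,N^{-1}$, so $A''\in\M_d(\Z)$ and, crucially, $A''$ and $B$ are \emph{simultaneously} diagonalized by the single matrix $N$. This reduces both parts to the following situation: two integer matrices $A_0$ and $B$ (with $A_0=A''$, and $A_0=A$ in part~(i)) are simultaneously diagonalized by one matrix, have irreducible characteristic polynomials, and their eigenvalues $\la_1,\mu_1$ satisfy~\eqref{eq:112233}; one must then show $G_{A_0}=G_B$.

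For this I pass to the number field $K=\Q(\la_1)=\Q(\mu_1)$. Irreducibility of $h_A$ makes $\Q^d$ a one-dimensional $K$-vector space on which $A_0$ acts as multiplication by $\la_1$; fixing such an identification (coming from $N$) simultaneously realizes $B$ as multiplication by $\mu_1$ and sends $\Z^d$ to a full $\Z$-lattice $L\subseteq K$. Under this dictionary $G_{A_0}$ becomes $\bigcup_{k\ge0}\la_1^{-k}L$, the localization of $L$ obtained by inverting the powers of $\la_1$, and likewise $G_B$ becomes the localization inverting the powers of $\mu_1$. Since $\la_1,\mu_1\in\cO_K$ are algebraic integers, inverting $\la_1$ inverts exactly the prime ideals of $\cO_K$ dividing $\la_1$, so by~\eqref{eq:112233} the two localizations invert the same set of primes and therefore coincide, giving $G_{A_0}=G_B$ and hence $TG_A=G_B$.

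The step I expect to be most delicate is this last arithmetic point: verifying the localization dictionary $G_{A_0}\cong\bigcup_{k\ge0}\la_1^{-k}L$ and proving cleanly that two such localizations agree precisely when $\la_1$ and $\mu_1$ have the same prime divisors. The subtlety is that $L$ is only guaranteed to be stable under the order $\Z[\la_1,\mu_1]$ rather than the maximal order $\cO_K$, so matching the module localizations with the ideal-theoretic factorizations requires care; this is exactly the content packaged in~\cite[Proposition~5.7]{s1}. In a write-up I would either invoke its converse directly or reproduce this localization computation. Once $TG_A=G_B$ is in hand, the two conclusions follow from Theorem~\ref{th:chug}(1) and~(2) as indicated above.
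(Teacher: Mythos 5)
Your proposal is correct in outline and shares the paper's skeleton: the same reduction of part $(ii)$ to part $(i)$ via $X=NM^{-1}\in\GL_d(\Z)$ and $XG_A=G_{XAX^{-1}}=G_{N\La N^{-1}}$, the same target identity $G_{A_0}=G_B$, and the same endgame via Theorem~\ref{th:chug}(1),(2). Where you genuinely diverge is in how $G_{A_0}=G_B$ is established. The paper first runs a Galois-theoretic argument (transitivity of $\operatorname{Gal}(\overline{\Q}/\Q)$ on the roots of $h_A$, applied to the shared eigenvector) to show $h_B$ is irreducible and that \emph{every} pair $\la_i,\mu_i$ shares prime ideal divisors, and then invokes the classification machinery of~\cite{s1}: the system $\cS(A)=\{{\bf f}_1,\ldots,{\bf f}_d,\alpha_{pij}\}$ determines the generators of $G_A$ \cite[Lemma 3.5]{s1} and is computable from eigenvectors attached to eigenvalues divisible by primes over $p\in\cP'$ \cite[Remark 4.4]{s1}, so matching data forces $G_A=G_B$. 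You instead identify $\Q^d$ with $K=\Q(\la_1)$ as a one-dimensional $K$-vector space (the centralizer of $A_0$ is $\Q[A_0]\cong K$ because $h_A$ is irreducible, so $B$ acts as multiplication by $\mu_1$), turning $G_{A_0}$ and $G_B$ into the localizations $L[1/\la_1]$ and $L[1/\mu_1]$ of a single lattice $L$; this is more self-contained and uses only the one pair $\la_1,\mu_1$ that \eqref{eq:112233} actually speaks about. The subtlety you flag (that $L$ is only a module over the order $\cO=\Z[\la_1,\mu_1]$, not over $\cO_K$) is real but can be closed without citing~\cite{s1}: since $\cO\subseteq\cO_K$ is an integral extension, lying over shows that $\la_1$ and $\mu_1$ lie in exactly the same maximal ideals of $\cO$ (if $\la_1\in\mathfrak{q}$, pick a prime $\mathfrak{p}$ of $\cO_K$ over $\mathfrak{q}$; then $\la_1\in\mathfrak{p}$, hence $\mu_1\in\mathfrak{p}$ by \eqref{eq:112233}, hence $\mu_1\in\mathfrak{p}\cap\cO=\mathfrak{q}$, and symmetrically); consequently $\cO[1/\la_1]=\bigcap_{\mathfrak{q}\not\ni\la_1}\cO_{\mathfrak{q}}=\bigcap_{\mathfrak{q}\not\ni\mu_1}\cO_{\mathfrak{q}}=\cO[1/\mu_1]$ inside $K$, and therefore $L[1/\la_1]=L\cdot\cO[1/\la_1]=L\cdot\cO[1/\mu_1]=L[1/\mu_1]$. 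One caution: do not plan on invoking ``the converse of \cite[Proposition 5.7]{s1}'' as a citation shortcut --- the paper itself does not use that proposition in this direction (it relies on \cite[Lemma 3.5]{s1} and \cite[Remark 4.4]{s1}), so in a write-up you should include the localization computation just sketched rather than assume the converse is on record.
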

\begin{proof}
First, $(ii)$ follows easily from $(i)$ as in the proof of \cite[Lemma 8.10]{s}. We repeat the argument for the sake of completeness. Assume $(i)$ holds, let $X=NM^{-1}\in\GL_d(\Z)$, and assume \eqref{eq:112233} and 
\eqref{eq:matr} hold. Then $XM=N$ and
$$
XG_A=G_{XAX^{-1}}=G_{N\La N^{-1}}=G_B,\quad \La=\left(\begin{matrix}
\la_1 & \cdots & 0 \\
\vdots & \ddots & \vdots \\
0 & \cdots & \la_d 
\end{matrix}
\right).
$$
Here, $XAX^{-1}=N\La N^{-1}\in\M_d(\Z)$, since $X\in\GL_d(\Z)$. Also, $G_{N\La N^{-1}}=G_B$ by $(i)$ and Theorem \ref{th:chug}. Thus, $\Z^d$-actions $Y_A$, $Y_B$ are isomorphic by Theorem \ref{th:chug}.

\sbr

We now prove $(i)$. Assume \eqref{eq:112233}, 
\eqref{eq:matr} hold and $N=M$. Since $h_A$ is irreducible, \eqref{eq:112233} implies that $h_B$ is also irreducible.
Indeed, $h_B$ is irreducible if and only if $[\Q(\mu_1):\Q]=d$. From \eqref{eq:matr} with $N=M$, we see that $A$, $B$ share the same eigenvector ${\bf u}\in\overline{\Q}^d$ such that $A{\bf u}=\la_1{\bf u}$, $B{\bf u}=\mu_1{\bf u}$. Since $h_A$ is irreducible, the Galois group $G=\operatorname{Gal}(\overline{\Q}/\Q)$ acts transitively on the eigenvalues of $A$, {\it i.e.}, there exist $\sigma_2,\ldots,\sigma_d\in G$ such that $\la_i=\sigma_i(\la_1)$, $\sigma_1=\id$, $1\leq i\leq d$. Since $A$, $B$ have integer coefficients, by applying $\sigma_i$ to $A{\bf u}=\la_1{\bf u}$, $B{\bf u}=\mu_1{\bf u}$, we conclude that 
$\sigma_i({\bf u})$ is an eigenvector of $A$ (resp., $B$) corresponding to $\la_i$ (resp., $\sigma_i(\mu_1)$), $1\leq i\leq d$, and $\sigma_1(\mu_1),\ldots, \sigma_d(\mu_1)$ are all (distinct) eigenvalues of $B$. 
By abuse of notation, let $\mu_i=\sigma_i(\mu_i)$, $1\leq i\leq d$. Moreover, it follows from  \eqref{eq:112233} that $h_A$, $h_B$ share the same splitting field $K$ and each pair $\la_i$, $\mu_i$ share the same prime ideal divisors in the ring of integers $\cO_K$ of $K$. Therefore, $\cP=\cP(A)=\cP(B)$, $\cP'=\cP'(A)=\cP'(B)$, and $t_p=t_p(A)=t_p(B)$ for any prime $p\in\N$. 
In \cite{s1}, we discuss the characteristic $\{\alpha_{pij} \}_{p,i,j}$ of a group $G_A$ with respect to a free basis $\{{\bf f}_1,\ldots,{\bf f}_d\}$ of $\Z^d$, where $p\in\cP'(A)$, $\alpha_{pij}\in\Z_p$, $1\leq i\leq t_p(A)$, $t_p(A)+1\leq j\leq d$. The system
$\cS(A)=\{{\bf f}_1,\ldots,{\bf f}_d,\alpha_{pij}\,\,\vert\,\,p,i,j\}$ determines all generators of $G_A$ over $\Z$ \cite[Lemma 3.5]{s1}. Furthermore, we show that $\cS(A)$ can be calculated from eigenvectors of $A$ corresponding to eigenvalues divisible by a prime ideal of $\cO_K$ that divides $p$, $p\in\cP'$ \cite[Remark 4.4]{s1}. Since each pair 
$\la_i$, $\mu_i$ share the same prime ideal divisors and $A$, $B$ share the same eigenvectors corresponding to 
$\la_i$, $\mu_i$, respectively, we see that $G_A$, $G_B$ share the same set of generators. Therefore, $G_A=G_B$.
\end{proof}

\begin{remark}
It follows from its proof that in Lemma \ref{l:conjs}$(i)$, instead of \eqref{eq:matr} with $N=M$, it is enough to assume that $A$, $B$ share the same eigenvector ${\bf u}\in\overline{\Q}^d$ such that $A{\bf u}=\la_1{\bf u}$, $B{\bf u}=\mu_1{\bf u}$. 

\sbr

It is well-known (it follows from the Latimer--MacDuffee--Taussky Theorem) that for a fixed monic irreducible polynomial $h\in\Z[t]$ of degree $d$ there are finitely many $\GL_d(\Z)$-conjugacy classes $[A]$ of $A\in\M_d(\Z)$ with characteristic polynomial $h$. Let $n=n(h)$ denote the number of conjugacy classes. Also, for $S\in\GL_d(\Z)$ we have $SG_A=G_{SAS^{-1}}$. As was discussed above, we know that if there exists $t_p$ with $(t_p,d)=1$, $A,B\in\M_d(\Z)$ share the same irreducible characteristic polynomial and $TG_A=G_B$ for some $T\in\GL_d(\Q)$, then $TAT^{-1}=B$ \cite{s1}. This implies that there are exactly $n$ isomorphism classes 
$[Y_A]_{isom}$ of odometers $Y_A$, where $A\in\M_d(\Z)$ has characteristic polynomial $h$, and there are less or equal than $n$
continuously orbit equivalent classes $[Y_A]_{cont.orb}$ of odometers $Y_A$, where $A\in\M_d(\Z)$ has characteristic polynomial $h$. 
\end{remark}

\begin{remark}
Note that $M=N$ implies $A$, $B$ commute. However, $AB=BA$ does not imply conjugacy, since the ordering of eigenvalues matters. For example, $G_A\ne G_B$ for
$
A=\left(\begin{matrix}
3 & 0 \\
0 & 5 
\end{matrix}
\right),\,\,
B=\left(\begin{matrix}
5 & 0 \\
0 & 3 
\end{matrix}
\right).
$
\end{remark}

\begin{remark}
We know from the $2$-dimensional case that for odometers $Y_A$ defined by  $A\in\M_d(\Z)$, continuous orbit equivalence is more subtle than conjugacy and isomorphism. 
Even in the $2$-dimensional case, general sufficient conditions for $\Z^2$-actions $Y_A$, $Y_B$ to be 
continuously orbit equivalent under the conditions of Lemma \ref{l:irredodo} become rather technical. 
However, in each particular example, it is possible to resolve the question using Theorem \ref{th:chug} and the results 
in \cite{s} and \cite{s1}. 
\end{remark}

\begin{example}\label{ex:7} 
In this example, $d=3$ and
$A,B, C\in\M_3(\Z)$ have the same irreducible characteristic polynomial $h(t)=t^3-39t-91$, 
$$
A=\left(\begin{matrix}
0 & 1 & 0 \\
0 & 0 & 1 \\
91 & 39 & 0
\end{matrix}
\right),\quad
B=\left(\begin{matrix}
7 & 0 & 0 \\
5 & 1 & 0 \\
-24 & -4 & 1
\end{matrix}
\right),\quad
C=\left(\begin{matrix}
49 & 0 & 0 \\
33 & 1 & 0 \\
4 & -4 & 1
\end{matrix}
\right).
$$
Since $\det A=\det B=\det C$, by Lemma \ref{l:orbit}, $\Z^d$-actions $Y_A, Y_B,Y_C$ are orbit equivalent.
All three $A$, $B$, and $C$ are conjugate to each other in $\GL_3(\Q)$. Moreover, $A$ is a companion matrix of $B$ and $C$. The three matrices above give (all) three equivalence classes of integer matrices with characteristic polynomial $h$ up to conjugation by elements in $\GL_3(\Z)$, {\it i.e.}, any matrix in $\M_3(\Z)$ with characteristic polynomial $h$ is 
$\GL_3(\Z)$-conjugate to $A$, $B$, or $C$, and any two matrices out of $A$, $B$, and $C$ are not $\GL_3(\Z)$-conjugate to each other (this can be verified using \cite{db}, \cite{sage}). Using the  methods of \cite{s1}, we can prove that any two out of $Y_A,Y_B$, and $Y_C$ are not continuously orbit equivalent.
\end{example}

\begin{example}\label{ex:6} 
In this example, $d=4$, $A,B\in\M_4(\Z)$ have the same irreducible characteristic polynomial $h(t)=t^4+t^2+9$, 
$$
A=\left(\begin{matrix}
0 & 1 & 0 & 0 \\
0 & 0 & 1 & 0 \\
0 & 0 & 0 & 1 \\
-9 & 0 & -1 & 0
\end{matrix}
\right),\quad
B=\left(\begin{matrix}
0 & 1 & -1 & 0 \\
9 & 0 & 2 & 1 \\
9 & 0 & 1 & 1 \\
-18 & -9 & 7 & -1
\end{matrix}
\right).
$$
One can show that Lemma \ref{l:conjs}$(i)$ holds for $A$, $B$, so that 
$G_A=G_B$ (see also \cite[Example 11]{s1}). Thus, $\Z^d$-actions $Y_A, Y_B$ are conjugate.
\end{example}

\end{document}